\def\pbg{\vphantom{\vrule height 11pt}}
\date{13 October 2014 - version 7x}
\def\scalar{K}
\newtheorem{theorem}{Theorem}[section]
\newtheorem{lemma}[theorem]{Lemma}
\newtheorem{remark}[theorem]{Remark}
\newtheorem{definition}[theorem]{Definition}
\newtheorem{example}[theorem]{Example}
\renewcommand{\theequation}{%
\thesection.\alph{equation}} \@addtoreset{equation}{section}
\begin{document}
\makeatletter
\renewcommand{\theequation}{%
\thesection.\alph{equation}} \@addtoreset{equation}{section}
%%%%%%%%%%%%%%%%%%%%%%%%%%
\makeatother
\title[Proper surfaces]
{Real analytic complete non-compact surfaces in Euclidean space with finite total curvature arising
as solutions to ODEs}
\author
{P. Gilkey, C.Y. Kim, and J. H. Park${}^1$}
\address{PG: Mathematics Department, University of Oregon,
Eugene OR 97403 USA}
\email{gilkey@uoregon.edu}
\address{CYK \& JHP:Department of Mathematics,
Sungkyunkwan University,
Suwon, 440-746, Korea}
\email{intcomplex@skku.edu, parkj@skku.edu}
\begin{abstract} We use the solution space
of a pair of ODEs of
at least second order
 to construct a smooth surface in Euclidean space.
We describe when this surface is a proper embedding which is geodesically
complete with finite total Gauss curvature. If the associated roots of the ODEs
are real and distinct, we give a universal upper bound for
the total Gauss curvature of the surface which depends only on the orders of the ODEs
and we show that the total Gauss curvature of the surface vanishes if the ODEs are second
order. We examine when the surfaces are asymptotically minimal.
\end{abstract}
\subjclass[2010]{53A05 (primary); 53C21 (secondary)}
\keywords{geodesically complete surface, finite total Gauss curvature, Gauss-Bonnet theorem, asymptotically minimal,
constant coefficient ordinary differential equation.\\${}^1$ Corresponding author}
\maketitle

\section{Introduction}

\subsection{Historical context}
Let $\Sigma$ be a finitely connected non-compact geodesically
complete Riemann surface. If the Gauss curvature $K$
 is integrable with respect to the Riemannian
element of volume, $\operatorname{dvol}$, then the {\it total Gauss
curvature} is given by
$\scalar[\Sigma]:=\int_\Sigma K\operatorname{dvol}$.
The total Gauss curvature plays an important role in many settings --
and the role is subtly different in each application.
Cohn-Vossen \cite{CV35, CV36} showed that
\begin{equation}\label{E1.a}
\scalar[\Sigma]\le 2\pi\chi(\Sigma)\,.
\end{equation}
Subsequently, Huber \cite{H57} reproved this result and showed
additionally that if the total volume of $\Sigma$ was finite, then equality
holds. We also refer to a more recent derivation of Equation~(\ref{E1.a})
by Bleecker \cite{B74} of using work of Chern.
Higher dimensional analogues have been studied --
see, for example, Dillen and K\"uhnel \cite{DK05}.

Mafra \cite{M07} examined the question of whether
a holomorphic curve in $\mathbb{C}^2$ with finite total Gauss curvature
is contained in an algebraic curve. Shioya \cite{S94}
showed that if $\scalar[\Sigma]<2\pi$, then any maximal
geodesic outside a sufficiently large compact set in $\Sigma$ forms almost the same shape as that of a maximal geodesic in a flat cone.
Shioya \cite{S04} subsequently considered the case where
$\scalar[\Sigma]=2\pi$ (see also related work in
Shiohama et al. \cite{S85,SST03, SST90}).
Carron et al. \cite{CEK04} showed the existence of geometrically
bound states if $\scalar[\Sigma]<\infty$ and $\Sigma$ is not homeomorphic
to the plane. Li et al. \cite{LLT13} examined conformal maps
of the 2-disk into $\mathbb{R}^n$ under the condition that the total
Gauss curvature was at most $2\pi$.

The total Gauss curvature
is central to the study of minimal surfaces.
If the surface is minimal,
Chern and Osserman \cite{CO67} improved Equation~(\ref{E1.a})
to become
$$\scalar[\Sigma]\le 2\pi(\chi(\Sigma)-e)$$
where $e$ denotes the number of ends.
We refer to subsequent work of
Jorge and Meeks \cite{JM83}, and Kobuku et al. \cite{KUY02} (among
others).
We also refer to the discussion in Chen and Cheng \cite{CC00} or
Seo \cite{S10} where the ambient space is
$\mathbb{H}^n$, to Esteve and Palmer \cite{EP14}
where the ambient manifold is a Cartan-Hadamard manifold,
and to Ma \cite{M13}  and Ma, Wang, and Wang \cite{MWW13}
 where the ambient space is Lorentzian.

Integrals of the Gauss curvature are not only important in the 2-dimensional
setting. For example, Willerton \cite{W14} used the total Gauss curvature
to examine the leading terms in the magnitude of
an arbitrary homogeneous Riemannian manifold. Hwang et al. \cite{HCY13}
used the total Gauss curvature to study the eigenvalues of the Laplacian.
The total Gauss curvature plays an important role in Ricci flow. Li \cite{L13}
showed the lowest eigenvalue in a family of geometric
operators was monotonic under the normalized Ricci flow if the initial
manifold had nonpositive total Gauss curvature. Chow et al. \cite{CLY12}
gave a necessary and sufficient condition for the asymptotic volume
ratio to be positive that involved the average Gauss curvature.
That total Gauss curvature has also been studied for connections other
than the Levi-Civita connection, see, for example,
the discussion in Stephanov et al. \cite{STM11}.

\subsection{Outline of the paper}
In this paper, we shall discuss a family of non-compact
real analytic isometric embeddings $\Sigma$ of the plane
in Euclidean space $\mathbb{R}^n$ which arise as the solution space to a pair of ODE's.
The condition that $\Sigma$
is real analytic is, of course, important as otherwise one could simple take a flat plane and put
a small bump in it; this would, of course produce $\scalar[\Sigma]=0$ and
for many of our examples, $\scalar[\Sigma]$ is strictly negative.

We shall assume that all the roots of the associated characteristic
polynomials are simple to avoid notational complexities with the
multiplicities; the second author is investigating what
happens when the roots have higher multiplicities in his thesis. We
shall also assume that the real roots of the associated
characteristic polynomials are dominant, i.e. control the asymptotic
behavior of the embedding at infinity. Under these conditions, we
will show in Theorem~\ref{T1.7} that the surface $\Sigma$ is
properly embedded, is geodesically complete, and has infinite
volume. We will also show in Theorem~\ref{T1.8} that the Gauss
curvature $K\in L^1(\Sigma,\operatorname{dvol})$ and hence the total
Gauss curvature $\scalar[\Sigma]$ is well defined. In
Example~\ref{Ex8.2}, we show that $|\scalar|[\Sigma]$ can be
infinite if the real roots are not dominant.

In Theorem~\ref{T1.11}, we use the Gauss-Bonnet Theorem to express $K[\Sigma]$
in terms of integrals along the coordinate curves.
The case where the two ODE's are second order is particularly
tractable; we will use Theorem~\ref{T1.11} to prove Theorem~\ref{T1.12}
which shows that $\scalar[\Sigma]=0$ if $n_1=n_2=2$.
In Example~\ref{Ex8.5}, we show $\scalar[\Sigma]$ can be negative if $n_1=n_2=3$ so
this result is non-trivial. If all the roots of the associated ODE's are real, we will
show in Theorem~\ref{T1.13} that there is a uniform
upper bound for $|K|[\Sigma]$ which depends only on the dimension; again, this uses Theorem~\ref{T1.11}.
In Example~\ref{Ex8.6}, we will provide a family of examples $\Sigma_k$
where this condition fails and where $\lim_{k\rightarrow\infty}K[\Sigma_k]=-\infty$.
If all the roots are real and if there are at least two positive and at least two negative roots for each ODE,
we show in Theorem~\ref{T1.15} that the mean curvature vector $H$ goes to zero at infinity and that
$H\in L^3(\Sigma,\operatorname{dvol})$ so $\Sigma$ is asymptotically minimal; in Example~\ref{Ex8.7}
we show the condition that there are at least two roots of each sign is essential in this regard and in
Example~\ref{Ex8.8} we show that $p=3$ is optimal if a uniform estimate is required.

The present paper grew out of the study of curves of finite total
first curvature given by an ODE with two other authors \cite{GKMPY14}. We begin
by reviewing these results for the convenience of the reader
as many of our subsequent theorems depend on these results.
The rest of the introduction is then a careful statement of the
main results of the paper. Section~\ref{S2} is an introduction
to the geometry of surfaces embedded in $\mathbb{R}^n$
and expresses the relevant geometric quantities we shall need
in terms of the exterior algebra as this is a convenient formalism
for our purposes. In Section~\ref{Sect-3}, we demonstrate
Theorem~\ref{T1.7}, in Section~\ref{S4} we establish
Theorem~\ref{T1.8},  in Section~\ref{S5} we derive Theorem~\ref{T1.11},
and in Section~\ref{S6}
we use the Gauss-Bonnet theorem to prove Theorem~\ref{T1.12};
this express $\scalar[\Sigma]$ in terms
of the curves defined by the two ODE's and plays a central
role in the proof of Theorem~\ref{T1.13} which gives a uniform estimate
for $\scalar[\Sigma]$. In Section~\ref{S7}, we examine the norm of the mean curvature
vector and prove Theorem~\ref{T1.15}. We conclude the paper in Section~\ref{S8} by
presenting some Mathematica calculations using a Mathematica program constructed
by M. Brozos-Vazquez \cite{M2} to discuss various illustrative examples.

\subsection{Curvature}
If $\vec u,\vec v\in\mathbb{R}^n$, let $(\vec u,\vec v):=u^1v^1+\dots u^nv^n$ and $||\vec u||^2:=(\vec u,\vec u)$.
We extend $(\cdot,\cdot)$ to an inner product on tensors on all types and, in particular, to the exterior algebra on
$\mathbb{R}^n$.
If $\sigma:\mathbb{R}\rightarrow\mathbb{R}^n$ is an immersed curve, then
the {\it element of arc length} $ds=ds_\sigma$,
the {\it first curvature} $\kappa=\kappa_\sigma$, and the {\it total first curvature} $\kappa[\sigma]$ are defined, respectively, by:
\begin{eqnarray*}
&&ds:=||\dot\sigma(t)||dt,\quad
\kappa_\sigma(t):=\frac{||\dot\sigma(t)\wedge\ddot\sigma(t)||}{||\dot\sigma(t)||^3},\\
&&\kappa[\sigma]:=\int_\sigma\kappa ds
=\int_{-\infty}^\infty\frac{||\dot\sigma(t)\wedge\ddot\sigma(t)||}{||\dot\sigma(t)||^2}dt\,.
\end{eqnarray*}
If $\Sigma$ is an immersed surface in $\mathbb{R}^n$, let $\operatorname{dvol}$
be the Riemannian measure and let
$\scalar$ be the Gauss curvature.
If $|\scalar|$ is in $L^1(M,\operatorname{dvol})$,
let $\scalar[\Sigma]:=\int_\Sigma\scalar\operatorname{dvol}$.

\subsection{Curves defined by ODEs}
We review briefly some previous results that we shall need and refer
to the discussion in \cite{GKMPY14} for further details.
If $\phi=\phi(t)$ is a smooth real valued function,
let $\phi^{(i)}$ be the $i^{\operatorname{th}}$ derivative. Let
$$P(\phi):=\phi^{(n)}+c_{n-1}\phi^{(n-1)}+\dots+c_0\phi$$
be
 a real constant coefficient ordinary differential operator of order $n\ge2$.
Let $\mathcal{S}=\mathcal{S}(P)$ be the solution space of $P$,
let $\mathcal{P}=\mathcal{P}(P)$ be the characteristic polynomial of $P$, and
let $\mathcal{R}=\mathcal{R}(P)$ be the roots of $\mathcal{P}$:
$$\begin{array}{ll}
\mathcal{S}:=\{\phi\in C^\infty(\mathbb{R}):P(\phi)=0\},&
\mathcal{P}(\lambda):=\lambda^n+c_{n-1}\lambda^{n-1}+\dots+c_0,\\
\mathcal{R}:=\{\lambda\in\mathbb{C}:\mathcal{P}(\lambda)=0\}\,.
\pbg
\end{array}$$
Enumerate the roots in the form
 $\mathcal{R}=\{s_1,\dots,s_k,z_1,\dots,z_u,\bar z_1,\dots,\bar z_u\}$ where the $\{s_i\}$ are
the distinct real roots of $\mathcal{P}$ for $1\le i\le k$ and where
the $\{z_j=a_j+b_j\sqrt{-1}\}$ are the distinct complex roots of $\mathcal{P}$ for $1\le j\le u$
where $b_j>0$. We order the real roots so $s_1>\dots>s_k$ and the complex roots
so $a_1\ge a_2\ge\dots$. If there are no real roots, we set $k=0$;
if there are no complex roots, we set $u=0$.

\begin{definition}
\rm Let $\Re(\cdot)$ be the real part of a complex number.
 We shall say that a root $\lambda\in\mathcal{R}$ is {\it dominant} if $\Re(\lambda)>\Re(\mu)$
 for all $\mu\in\mathcal{R}-\{\lambda\}$
and if $\Re(\lambda)>0$ or if $\Re(\lambda)<\Re(\mu)$ for
all $\mu\in\mathcal{R}-\{\lambda\}$ and if $\Re(\lambda)<0$;
note that a dominant root is necessarily real and is either $s_1$ or $s_k$.
One has that $s_1$ is dominant if $s_1>0$ and if $s_1>a_1$ and similarly that $s_k$ is dominant if $0>s_k$
and if $a_u>s_k$.
If $s_1$ is dominant and if $\lambda\in\mathcal{R}-\{s_1\}$ satisfies $\Re(\lambda)\ge\Re(\mu)$ for
all $\mu\in\mathcal{R}-\{s_1\}$, then we say $\lambda$ is {\it sub-dominant}. Similarly if $s_k$ is dominant
and if $\lambda\in\mathcal{R}-\{s_k\}$ satisfies $\Re(\lambda)\le\Re(\mu)$ for all $\mu\in\mathcal{R}-\{s_k\}$,
then we say $\lambda$ is {\it sub-dominant}.
\end{definition}

If all the roots are simple (i.e. have multiplicity 1 so $n=k+2u$), then the canonical
basis for the solution space $\mathcal{S}$ consists of the functions:
\begin{equation}\label{E1.b}
\{e^{s_it},\ e^{a_jt}\cos(b_jt),\ e^{a_jt}\sin(b_jt)\}\text{ for }1\le i\le k\text{ and }1\le j\le u\,.
\end{equation}
The functions $\{e^{s_it}\}$ do not appear, of course, if there are no real roots and, similarly, the functions
$\{e^{a_jt}\cos(b_jt),e^{a_jt}\sin(b_jt)\}$ do not appear if there are no complex roots.
More generally, if $s_i$ is a real root of multiplicity $\nu\ge2$, then we must replace the single
function $e^{s_it}$ in Equation~(\ref{E1.b}) by the $\nu$ functions
$$
\{e^{s_it},\ te^{s_it},\dots,\ t^{\nu-1}e^{s_it}\}
$$
while if $z_j$ is a complex root of multiplicity $\nu\ge2$, then we must replace
the pair of functions $\{e^{a_jt}\cos(b_jt),e^{a_jt}\cos(b_jt)\}$ in Equation~(\ref{E1.b}) by the $2\nu$
functions:
$$\begin{array}{l}
\{e^{a_jt}\cos(b_jt),\
te^{a_jt}\cos(b_jt),\dots,
t^{\nu-1}e^{a_jt}\cos(b_jt),\\
\phantom{\{}e^{a_jt}\sin(b_jt),\
te^{a_jt}\sin(b_jt),\dots,
t^{\nu-1}e^{a_jt}\sin(b_jt)\}\,.\pbg
\end{array}$$
Let $\{\phi_1,\dots,\phi_n\}$ be an enumeration of the canonical basis for $\mathcal{S}$
described above.
We define the {\it associated curve} $\sigma=\sigma_P$ by setting:
$$
\sigma(t):=(\phi_1(t),\dots,\phi_n(t)):\mathbb{R}\rightarrow\mathbb{R}^n\,.
$$
If $\{s_1,s_k\}$ are dominant roots, then
these roots control the behavior of $\|\sigma\|$ at infinity, i.e.:
$$
\lim_{t\rightarrow\infty}e^{-s_1t}\|\sigma\|=1,\quad\text{ and }\quad
\lim_{t\rightarrow-\infty}e^{-s_kt}\|\sigma\|=1\,.
$$
We refer to \cite{GKMPY14} for the proof of the following result:

\begin{theorem}\label{T1.2}
If all the roots of $\mathcal{P}$ are simple and if $\{s_1,s_k\}$ are dominant roots, then
$\sigma$ is a proper embedding
of $\mathbb{R}$ in $\mathbb{R}^n$ of infinite length with $\kappa[\sigma]<\infty$.
\end{theorem}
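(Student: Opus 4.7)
The plan is to derive all four conclusions from a single pair of asymptotic expansions dictated by the dominant roots. Arrange the canonical basis so that $\phi_i(t)=e^{s_it}$ for $1\le i\le k$. Since $s_1$ is dominant, $s_1>0$ and the gap $\delta:=s_1-\max_{\mu\in\mathcal{R}-\{s_1\}}\Re(\mu)$ is strictly positive, and since each oscillatory basis element satisfies $|e^{a_jt}\cos(b_jt)|,|e^{a_jt}\sin(b_jt)|\le e^{a_jt}$, I would first establish
$$e^{-s_1t}\sigma(t)=E_1+O(e^{-\delta t})\quad\text{and}\quad e^{-s_1t}\dot\sigma(t)=s_1E_1+O(e^{-\delta t})\quad\text{as }t\to+\infty,$$
together with the symmetric expansions at $-\infty$ using $s_k<0$ and a gap $\delta'>0$, where $E_i$ denotes the $i$th standard basis vector of $\mathbb{R}^n$. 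Three of the four conclusions follow immediately: (i) $\|\sigma(t)\|\to\infty$ as $|t|\to\infty$, so $\sigma$ is proper; (ii) $\|\dot\sigma(t)\|\sim|s_1|e^{s_1t}$ at $+\infty$ and $\sim|s_k|e^{s_kt}$ at $-\infty$, giving infinite length; (iii) if $\sigma(t_1)=\sigma(t_2)$ then $e^{s_1t_1}=e^{s_1t_2}$, which forces $t_1=t_2$ since $s_1\ne 0$, so $\sigma$ is injective. That $\sigma$ is an immersion is the standard Wronskian argument: the vectors $(\phi_i^{(j)}(t))_{i=1}^n$ for $0\le j\le n-1$ are linearly independent at every $t$, so $\dot\sigma(t)\ne 0$.

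The remaining step is $\kappa[\sigma]<\infty$. On any compact interval $\dot\sigma$ is nonvanishing and $\kappa_\sigma$ is continuous, so only the tails need analysis. Expanding
$$\dot\sigma\wedge\ddot\sigma=\sum_{i<j}\bigl(\phi_i'\phi_j''-\phi_i''\phi_j'\bigr)E_i\wedge E_j,$$
I would bound each Pl\"ucker coefficient by exponentials. A pair where one index corresponds to $e^{s_1t}$ and the other has characteristic exponent $\mu\ne s_1$ contributes $O(e^{(s_1+\Re(\mu))t})$, while a pair with neither index dominant contributes $O(e^{(\Re(\mu_i)+\Re(\mu_j))t})$ with both exponents strictly below $s_1$. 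Hence $\|\dot\sigma\wedge\ddot\sigma\|=O(e^{(2s_1-\delta)t})$ while $\|\dot\sigma\|^2\sim s_1^2e^{2s_1t}$, giving
$$\kappa_\sigma(t)\,\|\dot\sigma(t)\|=\frac{\|\dot\sigma\wedge\ddot\sigma\|}{\|\dot\sigma\|^2}=O(e^{-\delta t}),$$
which is integrable on $[T,\infty)$. The symmetric estimate at $-\infty$ uses $\delta'$, and combined with the bounded contribution from any compact interval this completes the proof.

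The main obstacle is keeping the oscillatory $\cos(b_jt)$ and $\sin(b_jt)$ factors from complex roots under control so as not to spoil the strict exponential gap; since they contribute only bounded factors, the real parts of the exponents alone govern the asymptotics, and the strict inequalities built into the definition of a dominant root are precisely what convert the upper bounds into integrable $O(e^{-\delta t})$ decay. A secondary delicate point is confirming that the leading coefficient $s_1E_1$ of $e^{-s_1t}\dot\sigma(t)$ is genuinely nonzero, which is where the condition $s_1\ne 0$ (built into dominance) supplies the denominator $e^{-2s_1t}$ needed to close the curvature estimate.
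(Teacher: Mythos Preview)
The paper does not supply its own proof of Theorem~\ref{T1.2}; it merely quotes the result from the companion paper \cite{GKMPY14}. Consequently there is no in-paper argument to compare against directly.

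Your proposal is correct. The asymptotic expansions $e^{-s_1t}\sigma(t)=E_1+O(e^{-\delta t})$ and $e^{-s_1t}\dot\sigma(t)=s_1E_1+O(e^{-\delta t})$ immediately yield properness, infinite length, and injectivity, and the Wronskian argument handles the immersion condition. The curvature estimate is also sound: since each Pl\"ucker coefficient in $\dot\sigma\wedge\ddot\sigma$ involves two distinct basis functions, the maximal growth rate is $e^{(2s_1-\delta)t}$ rather than $e^{2s_1t}$, and the integrand $\kappa_\sigma\,ds$ therefore decays like $e^{-\delta t}$ at $+\infty$ (and symmetrically at $-\infty$). In spirit your argument is exactly the one-variable analogue of what the present paper carries out for surfaces in Sections~\ref{Sect-3} and~\ref{S4}: the lower bound on $g$ in Lemma~\ref{L3.1} corresponds to your lower bound on $\|\dot\sigma\|$, and the upper bound on $\tilde L_{ij}$ in Lemma~\ref{L4.1} corresponds to your upper bound on $\|\dot\sigma\wedge\ddot\sigma\|$, both driven by the same observation that a nontrivial wedge forces at least one sub-dominant exponent to appear.
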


\begin{remark}\rm In Example~\ref{Ex8.1} we will see that $\kappa[\sigma]$
can be infinite if there exists a complex root $\lambda\in\mathcal{R}$ with $\Re(\lambda)$ maximal or
minimal.\end{remark}

We have taken the standard inner product on $\mathbb{R}^n$
to define the element of arc length $ds$ and the geodesic curvature $\kappa$.
The precise inner product is irrelevant; Theorem~\ref{T1.2}
continues to hold for an arbitrary positive definite inner product on $\mathbb{R}^n$.
Equivalently, this shows that it is not necessary to choose the standard basis for $\mathcal{S}$
in defining $\sigma$; any basis will do. Consequently, Theorem~\ref{T1.2} is really a result about the
solution space $\mathcal{S}$. If $a_1\ge s_1$ or $s_k\ge a_u$, then
the dominant exponential involves $\sin$ and $\cos$. This implies that the total first curvature is
infinite. There are analogous results when multiple roots are permitted; as
they are a bit more complicated to state, we shall refer to \cite{GKMPY14} for details.

If all the roots of $P$ are real and simple, then the associated curve is of the form
$$
\sigma(t)=\sigma_{s_1,\dots,s_n}(t):=(e^{s_1t},\dots,e^{s_nt})\text{ where }s_1>\dots>s_n\,.
$$
There is a uniform estimate for the total first curvature \cite{GKMPY14} of such a curve:
\begin{theorem}\label{T1.4}
  $\kappa[\sigma_{s_1,\dots,s_n}]\le2(n-1)n$.
\end{theorem}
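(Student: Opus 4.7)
The plan is to dominate $\kappa[\sigma_{s_1,\dots,s_n}]$ by a sum of $\binom{n}{2}$ planar total curvatures, one per coordinate pair, each of which can be evaluated in closed form.

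Reading off the Pl\"ucker components of $\dot\sigma\wedge\ddot\sigma$ from $\dot\sigma(t)=(s_ie^{s_it})_i$ and $\ddot\sigma(t)=(s_i^2e^{s_it})_i$ gives $(\dot\sigma\wedge\ddot\sigma)_{ij}=s_is_j(s_j-s_i)e^{(s_i+s_j)t}$ for $i<j$. Combining the inequality $\|\cdot\|_{\ell^2}\le\|\cdot\|_{\ell^1}$ on these components with the pairwise lower bound $\|\dot\sigma(t)\|^2\ge s_i^2e^{2s_it}+s_j^2e^{2s_jt}$ yields
$$\frac{\|\dot\sigma\wedge\ddot\sigma\|}{\|\dot\sigma\|^2}\;\le\;\sum_{i<j}\frac{|s_is_j(s_j-s_i)|\,e^{(s_i+s_j)t}}{s_i^2e^{2s_it}+s_j^2e^{2s_jt}}\,.$$
The summand on the right is exactly $\kappa_{\tau_{ij}}(t)\,\|\dot\tau_{ij}(t)\|$ for the planar coordinate projection $\tau_{ij}(t):=(e^{s_it},e^{s_jt})$, so integration in $t$ gives $\kappa[\sigma_{s_1,\dots,s_n}]\le\sum_{i<j}\kappa[\tau_{ij}]$.

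Next I would evaluate $\kappa[\tau_{ij}]=\pi/2$ for each pair with $s_is_j\ne 0$ by the substitution $e^{(s_i-s_j)t}=(|s_j|/|s_i|)\,e^{2u}$, which reduces the integrand to $\operatorname{sech}(2u)\,du$, and $\int_{-\infty}^{\infty}\operatorname{sech}(2u)\,du=\pi/2$. A coordinate-free sanity check is that $\tau_{ij}$ parametrizes a monotone graph $y=x^{s_j/s_i}$ in a single quadrant of the plane whose tangent direction rotates through exactly a quarter turn; if $s_i$ or $s_j$ vanishes the relevant Pl\"ucker component is $0$ and the pair contributes nothing. Summing the $\binom{n}{2}$ pair contributions,
$$\kappa[\sigma_{s_1,\dots,s_n}]\;\le\;\binom{n}{2}\cdot\frac{\pi}{2}\;=\;\frac{n(n-1)\pi}{4}\;\le\;2(n-1)n,$$
since $\pi/4<2$.

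The only non-routine step is the planar evaluation $\kappa[\tau_{ij}]=\pi/2$; the remainder is an elementary $\ell^2$-to-$\ell^1$ comparison together with a pairwise lower bound on $\|\dot\sigma\|^2$. The main point to check is that the substitution behaves uniformly across the sign patterns of $(s_i,s_j)$ (it does, because $s_i\neq s_j$ guarantees $s_i-s_j\ne 0$ and only the modulus ratio $|s_j|/|s_i|$ enters), and the degenerate cases $s_is_j=0$ are trivial as noted.
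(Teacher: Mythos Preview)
Your argument is correct. In fact it yields the sharper bound $\kappa[\sigma_{s_1,\dots,s_n}]\le\frac{\pi}{4}n(n-1)$, which is strictly better than the stated $2(n-1)n$.

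Note that the paper itself does not prove Theorem~\ref{T1.4}; it merely quotes the result from \cite{GKMPY14}, so there is no ``paper's own proof'' to compare against here. Your approach---dominating the Pl\"ucker norm by the $\ell^1$-sum of its components, bounding $\|\dot\sigma\|^2$ from below by the two relevant coordinates, and then recognising each summand as the total curvature integrand of the planar projection $\tau_{ij}$---is clean and self-contained. The evaluation $\kappa[\tau_{ij}]=\pi/2$ is essentially the same computation the paper carries out in Section~\ref{S6} (there for $\Theta[\sigma]$ with one positive and one negative exponent), and your substitution $e^{(s_i-s_j)t}=(|s_j|/|s_i|)e^{2u}$ indeed handles all sign patterns of $(s_i,s_j)$ uniformly since only $s_i\ne s_j$ and $s_is_j\ne0$ are used. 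The degenerate case $s_is_j=0$ contributes nothing, as you observe.

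One stylistic remark: you might make the step
\[
\frac{\sum_{i<j}|(\dot\sigma\wedge\ddot\sigma)_{ij}|}{\|\dot\sigma\|^2}
=\sum_{i<j}\frac{|(\dot\sigma\wedge\ddot\sigma)_{ij}|}{\|\dot\sigma\|^2}
\le\sum_{i<j}\frac{|(\dot\sigma\wedge\ddot\sigma)_{ij}|}{s_i^2e^{2s_it}+s_j^2e^{2s_jt}}
\]
explicit, since applying a \emph{different} lower bound on the common denominator term-by-term is the key manoeuvre and a reader might otherwise wonder whether something illegitimate is happening.
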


\begin{remark}\rm Let $\sigma_k(t):=(e^t,\cos(kt),\sin(kt),e^{-t})$ so
$\mathcal{R}_k=\{\pm1,\pm\sqrt{-1}\}$.
In Example~\ref{Ex8.4}, we will show that $\lim_{k\rightarrow\infty}\kappa[\sigma_k]=\infty$ so the
assumption all the roots are real is essential to establish a uniform upper bound.
\end{remark}

\subsection{Surfaces defined by a pair of ODEs}\label{S1.5}
We establish some basic notational conventions for the remainder of the paper.
Let $\mathcal{R}_1$ (resp. $\mathcal{R}_2$) be the roots and let $\sigma_1$ (resp. $\sigma_2$) be the curve defined by the ODE $P_1$ (resp. $P_2$).
We assume that all the roots are simple and express:
\begin{eqnarray*}
&&\mathcal{R}_1=\{r_1,\dots,r_k,a_{1}\pm b_1\sqrt{-1},\dots a_p\pm b_p\sqrt{-1}\},\\
&&\mathcal{R}_2=\{s_1,\dots,s_\ell,c_1\pm d_1\sqrt{-1},\dots, c_q\pm d_q\sqrt{-1}\},\\
&&\sigma_1(t_1):=(e^{r_1t_1},\dots,e^{r_kt_1},e^{a_1t_1}\cos(b_1t_1),e^{a_1t_1}\sin(b_1t_1),\dots),\\
&&\sigma_2(t_2):=(e^{s_1t_2},\dots,e^{s_\ell t_2},e^{c_1t_2}\cos(d_1t_2),e^{c_1t_2}\sin(d_1t_2),\dots)\,.
\end{eqnarray*}
Let $n=n_1n_2$ and let $\Sigma:\mathbb{R}^2\rightarrow\mathbb{R}^n$ be defined by:
$$\Sigma(t_1,t_2):=\sigma_1(t_1)\otimes\sigma_2(t_2)\,.$$
If $\{\phi_{1,1},\dots,\phi_{n_1,1}\}$ (resp. $\{\phi_{1,2},\dots,\phi_{n_2,2}\}$) is the standard basis
for the solution space of $P_1$ (resp. $P_2$), then the coordinates of $\Sigma$ are the collection of functions
$\{\phi_{i,1}(t_1)\phi_{j,2}(t_2)\}$ for $1\le i\le n_1$ and $1\le j\le n_2$.
\begin{definition}\rm
Let $P_1$ and $P_2$ be real ODEs with simple roots. We say that {\it the real roots are dominant}
if $\{r_1,r_k\}$ are dominant roots for $P_1$ and $\{s_1,s_\ell\}$ are dominant roots for $P_2$.

\end{definition}
We shall establish the following generalization of Theorem~\ref{T1.2} in Section~\ref{S2}.
\begin{theorem}\label{T1.7}
If all the roots of $\mathcal{P}_1$ and of $\mathcal{P}_2$ are simple and if the real roots are dominant roots, then
$\Sigma$ is a proper
embedding of $\mathbb{R}^2$ in $\mathbb{R}^{n}$ which is geodesically complete and
which has infinite volume.
\end{theorem}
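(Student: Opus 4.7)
The plan is to exploit the product form $\Sigma = \sigma_1\otimes\sigma_2$ so that each of the four assertions reduces to properties of $\sigma_i$ coming from Theorem~\ref{T1.2} together with some rank-one tensor linear algebra. Note first that dominance forces $r_1>0>r_k$ and $s_1>0>s_\ell$, so $k,\ell\ge 2$; using the two components $e^{r_1t_1}$ and $e^{r_kt_1}$ one sees that $\|\sigma_1(t_1)\|\ge 1$ for every $t_1$ and $\|\sigma_1(t_1)\|\to\infty$ as $|t_1|\to\infty$, and similarly for $\sigma_2$.

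I would first dispatch properness from the identity $\|\Sigma(t_1,t_2)\|=\|\sigma_1(t_1)\|\,\|\sigma_2(t_2)\|$: any sequence escaping every compact of $\mathbb{R}^2$ has $|t_i^{(m)}|\to\infty$ for some $i$ along a subsequence, so the product of norms blows up. Injectivity uses that $\sigma_1(t_1)\otimes\sigma_2(t_2)$ is a nonzero rank-one tensor, hence determines the factors up to a reciprocal pair of scalars; equating it to $\sigma_1(t_1')\otimes\sigma_2(t_2')$ yields $\sigma_1(t_1)=\lambda\,\sigma_1(t_1')$, and comparing the $r_1$- and $r_k$-components gives $e^{(r_1-r_k)(t_1-t_1')}=1$, forcing $t_1=t_1'$, $\lambda=1$, and then $t_2=t_2'$ analogously. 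Immersivity amounts to linear independence of $\partial_1\Sigma=\dot\sigma_1\otimes\sigma_2$ and $\partial_2\Sigma=\sigma_1\otimes\dot\sigma_2$; since $\sigma_2,\dot\sigma_2\ne 0$, this reduces to linear independence of $\sigma_1$ and $\dot\sigma_1$, visible from the $(r_1,r_k)$-minor $\det\bigl(\begin{smallmatrix} e^{r_1t_1} & e^{r_kt_1}\\ r_1e^{r_1t_1} & r_ke^{r_kt_1}\end{smallmatrix}\bigr)=(r_k-r_1)e^{(r_1+r_k)t_1}$, which is nowhere zero. Together, $\Sigma$ is an injective proper immersion, hence a proper embedding.

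Geodesic completeness then follows in a standard way. The image is closed in $\mathbb{R}^n$; any intrinsic Cauchy sequence $(p_m)$ is Cauchy in the Euclidean norm because $\|x-y\|\le d_\Sigma(x,y)$, so it converges to some $p\in\mathbb{R}^n$, and $p\in\Sigma$ by closedness; since the submanifold topology agrees with the subspace topology, $p_m\to p$ also in $d_\Sigma$. Hopf--Rinow then upgrades metric completeness to geodesic completeness.

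The only substantive step is the infinite-volume claim. From $g_{11}=\|\dot\sigma_1\|^2\|\sigma_2\|^2$, $g_{22}=\|\sigma_1\|^2\|\dot\sigma_2\|^2$, $g_{12}=\langle\sigma_1,\dot\sigma_1\rangle\langle\sigma_2,\dot\sigma_2\rangle$, together with $\|\sigma_i\|^2\|\dot\sigma_i\|^2=\langle\sigma_i,\dot\sigma_i\rangle^2+\|\sigma_i\wedge\dot\sigma_i\|^2$, a short expansion gives
$$\det g=\langle\sigma_1,\dot\sigma_1\rangle^2\|\sigma_2\wedge\dot\sigma_2\|^2+\|\sigma_1\wedge\dot\sigma_1\|^2\langle\sigma_2,\dot\sigma_2\rangle^2+\|\sigma_1\wedge\dot\sigma_1\|^2\|\sigma_2\wedge\dot\sigma_2\|^2,$$
whence $\sqrt{\det g}\ge|\langle\sigma_1,\dot\sigma_1\rangle|\,\|\sigma_2\wedge\dot\sigma_2\|$. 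Fubini then yields
$$\operatorname{vol}(\Sigma)\ge\Bigl(\int_{\mathbb{R}}\bigl|\tfrac{1}{2}\tfrac{d}{dt_1}\|\sigma_1(t_1)\|^2\bigr|\,dt_1\Bigr)\Bigl(\int_{\mathbb{R}}\|\sigma_2(t_2)\wedge\dot\sigma_2(t_2)\|\,dt_2\Bigr);$$
the first factor is half the total variation of $\|\sigma_1\|^2$, which is infinite because $\|\sigma_1\|^2\to\infty$ at both ends of $\mathbb{R}$, and the second factor is strictly positive because the $(s_1,s_\ell)$-minor shows $\sigma_2\wedge\dot\sigma_2$ is nowhere zero. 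I expect the determinant-of-$g$ identity above to be the main place requiring care; everything else is essentially a one-line argument.
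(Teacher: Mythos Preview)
Your argument is correct, and in several places it is cleaner than the paper's. The main difference is that the paper proves Theorem~\ref{T1.7} by first establishing the quantitative exponential estimates of Lemma~\ref{L3.1}, namely $g\asymp\mathcal{G}$ on each quadrant and $\|\Sigma(t_1,t_2)\|\ge\epsilon_1\|(t_1,t_2)\|$, and then reads off immersivity, properness, infinite volume, and geodesic completeness (via a direct maximal-geodesic argument) from those bounds. You instead argue structurally from the tensor product: properness from $\|\sigma_1\otimes\sigma_2\|=\|\sigma_1\|\,\|\sigma_2\|$ together with $\|\sigma_i\|\ge1$ and $\|\sigma_i\|\to\infty$; injectivity from rank-one uniqueness plus the $(r_1,r_k)$-ratio (essentially identical to the paper's use of $\Psi_1/\Psi_2$); completeness from ``closed submanifold of $\mathbb{R}^n$ is metrically complete'' and Hopf--Rinow; and infinite volume from your pleasant identity for $\det g$ in terms of $\langle\sigma_i,\dot\sigma_i\rangle$ and $\|\sigma_i\wedge\dot\sigma_i\|$. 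Your route avoids any quadrant-by-quadrant exponential bookkeeping, which is a genuine economy for this theorem taken in isolation. The trade-off is that the paper's Lemma~\ref{L3.1} is not merely scaffolding for Theorem~\ref{T1.7}: the two-sided bound $C_1\mathcal{G}\le g\le C_2\mathcal{G}$ is reused essentially in the proofs of Theorems~\ref{T1.8} and~\ref{T1.15}, so if you adopt your approach you will still need to prove that lemma later. One small wording point: your immersivity sentence ``reduces to linear independence of $\sigma_1$ and $\dot\sigma_1$'' is correct as a sufficient condition but slightly underspecified; the cleanest statement is that $\sigma_i\wedge\dot\sigma_i\ne0$ for both $i$ (which you establish via the $(r_1,r_k)$ and $(s_1,s_\ell)$ minors), and then the third term in your $\det g$ formula already shows $\det g>0$.
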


We shall establish the following result in Section~\ref{S4}:

\begin{theorem}\label{T1.8}
If all the roots of $\mathcal{P}_1$ and of $\mathcal{P}_2$ are simple and if the real roots are dominant,
then there exist $\epsilon=\epsilon(\Sigma)>0$ and
$C=C(\Sigma)>0$ so
\begin{enumerate}
\item $|\scalar(t_1,t_2)|\le C e^{-\epsilon\|(t_1,t_2)\|}$.
\item $g|\scalar(t_1,t_2)|\le Ce^{-\epsilon\|(t_1,t_2)\|}$.
\item $|\scalar|[\Sigma]<\infty$.
\end{enumerate}
\end{theorem}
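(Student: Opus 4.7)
The plan is to carry out an asymptotic analysis of $\scalar$ as $(t_1,t_2)$ escapes to infinity. Since the real roots of $\mathcal{P}_1$ and $\mathcal{P}_2$ are dominant, fix $R>0$ and split $\mathbb{R}^2$ into the compact disk $\{\|(t_1,t_2)\|\le R\}$, on which $|\scalar|$ and $g$ are bounded by continuity so (1) and (2) are automatic after enlarging $C$, and four unbounded sectors in which a single pair $(\lambda_1,\lambda_2)\in\{r_1,r_k\}\times\{s_1,s_\ell\}$ of dominant roots controls the asymptotics. By the symmetries $t_i\mapsto -t_i$ we restrict attention to the first quadrant, with $\lambda_1=r_1>0$ and $\lambda_2=s_1>0$.

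Using the framework of Section~\ref{S2}, we write $\scalar=\mathcal{N}(\Sigma)/(\det g_{ij})^m$ for a universal polynomial $\mathcal{N}$ in the entries of the Gram matrix formed from $\Sigma_{t_1},\Sigma_{t_2},\Sigma_{t_1t_1},\Sigma_{t_1t_2},\Sigma_{t_2t_2}$. Because $\Sigma=\sigma_1\otimes\sigma_2$, each mixed partial factors as $\sigma_1^{(a)}(t_1)\otimes\sigma_2^{(b)}(t_2)$ and the identity $\langle u_1\otimes v_1,u_2\otimes v_2\rangle=\langle u_1,u_2\rangle\langle v_1,v_2\rangle$ reduces every inner product in $\mathcal{N}$ and in $\det g_{ij}$ to a product of one-variable inner products $\langle\sigma_1^{(a)},\sigma_1^{(b)}\rangle(t_1)\cdot\langle\sigma_2^{(c)},\sigma_2^{(d)}\rangle(t_2)$.

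In the first quadrant, decompose
$$\sigma_i(t_i)=e^{\lambda_i t_i}\bigl(\vec e_i+\vec\eta_i(t_i)\bigr),\qquad \|\vec\eta_i^{(j)}(t_i)\|\le Ce^{-\delta_i t_i}\quad (j=0,1,2),$$
where $\vec e_i$ is the basis vector for the coordinate carrying the dominant root and $\delta_i:=\lambda_i-\lambda_i'>0$ is the gap between $\lambda_i$ and the next-largest real part $\lambda_i'$ among the remaining roots of $\mathcal{P}_i$ (positive by the dominance hypothesis). Each single-variable inner product then expands as $\langle\sigma_i^{(a)},\sigma_i^{(b)}\rangle(t_i)=\lambda_i^{a+b}e^{2\lambda_i t_i}\bigl(1+O(e^{-\delta_i t_i})\bigr)$. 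The main computational step, and the principal obstacle, is to verify that when these expansions are substituted into $\mathcal{N}(\Sigma)$, the $O(1)$ leading coefficients --- obtained after factoring out the natural exponential scale set by $(\det g_{ij})^m$ --- cancel identically, so that $\mathcal{N}$ acquires an extra factor of order $e^{-\delta_1 t_1}+e^{-\delta_2 t_2}$ beyond what the denominator supplies. Geometrically this cancellation is forced: the purely leading term $e^{\lambda_1 t_1+\lambda_2 t_2}\,\vec e_1\otimes\vec e_2$ of $\Sigma$ parametrizes a ray through the origin, a one-dimensional image whose Gauss curvature must vanish, so the genuine curvature of $\Sigma$ is sourced only by the corrections $\vec\eta_i$; on the computational side the cancellation can be tracked, for example, via the identity $\det g_{ij}=\|\sigma_2\|^2\|\dot\sigma_2\|^2\|\sigma_1\wedge\dot\sigma_1\|^2+\langle\sigma_1,\dot\sigma_1\rangle^2\|\sigma_2\wedge\dot\sigma_2\|^2$, in which the wedge factors already carry the expected gain $e^{-\delta_i t_i}$.

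Granting the cancellation, a direct bookkeeping of exponentials yields $|\scalar(t_1,t_2)|\le Ce^{-\epsilon(t_1+t_2)}$ for any $0<\epsilon<\min(\delta_1,\delta_2)$; together with the analogous estimates in the other three quadrants (where $\lambda_i$ is suitably replaced by $r_k$ or $s_\ell$) and the elementary comparability of $t_1+t_2$ with $\|(t_1,t_2)\|$ inside each quadrant, this proves (1). Tracking $g$ through the same expansion gives (2) with a possibly smaller but still positive $\epsilon$. Assertion (3) then follows at once from (2):
$$|\scalar|[\Sigma]=\int_{\mathbb{R}^2}|\scalar|\,g\,dt_1\,dt_2\le C\int_{\mathbb{R}^2}e^{-\epsilon\|(t_1,t_2)\|}\,dt_1\,dt_2<\infty.$$
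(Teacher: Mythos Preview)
Your overall plan—reduce to one quadrant by symmetry, expand each $\sigma_i$ about its dominant exponential, and argue that $K$ is built from the sub-leading corrections—is sound and is also the paper's strategy. But the central quantitative step does not go through as written. You assert that $\mathcal{N}$ gains, beyond the scale of $(\det g_{ij})^m$, a factor of order $e^{-\delta_1 t_1}+e^{-\delta_2 t_2}$. Even granting this, you would obtain only $|K|\le C\bigl(e^{-\delta_1 t_1}+e^{-\delta_2 t_2}\bigr)$ on the first quadrant, and that does \emph{not} imply $|K|\le Ce^{-\epsilon(t_1+t_2)}$: along $t_2=0$, $t_1\to\infty$ your bound stays of order $1$. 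What is actually needed is a \emph{multiplicative} gain in both variables, and the ray heuristic does not supply it. The observation that the leading term parametrizes a ray tells you only that numerator and denominator both vanish at top order; it says nothing about their relative orders of vanishing below that. (Your idea can in fact be pushed further: the next approximations $e^{\lambda_2 t_2}\sigma_1(t_1)\otimes\vec e_2$ and $e^{\lambda_1 t_1}\vec e_1\otimes\sigma_2(t_2)$ are cones over curves, hence still flat, and that does force the first nonzero contribution to involve both $\vec\eta_1$ and $\vec\eta_2$—but that argument is not in your write-up and still needs care to turn into a bound on $\mathcal{N}/(\det g)^m$.)

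The paper sidesteps this difficulty by working in the exterior algebra from the outset. With $K=g^{-4}\{(\tilde L_{11},\tilde L_{22})-(\tilde L_{12},\tilde L_{12})\}$ and $\tilde L_{ij}=\Sigma_{/1}\wedge\Sigma_{/2}\wedge\Sigma_{/ij}$ (Lemma~\ref{L2.1}), each $\tilde L_{ij}$ is expanded in the orthogonal basis $(e_{m_1}\otimes f_{k_1})\wedge(e_{m_2}\otimes f_{k_2})\wedge(e_{m_3}\otimes f_{k_3})$, and the possible exponential growth rates are sorted into three ``Types'' (Lemma~\ref{L4.1}). Because $\Sigma_{/1}\wedge\Sigma_{/11}$ carries $\dot\sigma_1\wedge\ddot\sigma_1$, no term with three equal first indices survives in $\tilde L_{11}$, and symmetrically for $\tilde L_{22}$; combined with the orthogonality of distinct basis $3$-vectors this forces both $(\tilde L_{11},\tilde L_{22})$ and $(\tilde L_{12},\tilde L_{12})$ to grow at most like $e^{(4r_1+2a_1)t_1+(4s_1+2c_1)t_2}$. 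Paired with the lower bound $g^2\ge Ce^{(3r_1+a_1)t_1+(3s_1+c_1)t_2}$ from Lemma~\ref{L3.1}, this yields $|K|\le Ce^{-2r_1t_1-2s_1t_2}$ and $g|K|\le Ce^{-\frac12(r_1-a_1)t_1-\frac12(s_1-c_1)t_2}$ directly, with the required cancellations built into the wedge algebra rather than left to be verified by hand.
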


\begin{remark}\rm We will show in Example~\ref{Ex8.2} that this can fail if the
real roots are not dominant.
\end{remark}

\begin{definition}\label{D1.10}
\rm Let $\sigma$ be an immersed curve in $\mathbb{R}^n-\{0\}$ so that $\sigma\wedge\dot\sigma\ne0$; this is the case if
$\sigma$
is defined by a constant coefficient ODE of course. We define:
$$
\Theta_\sigma(t):=\frac{(\dot\sigma(t)\wedge\sigma(t),\dot\sigma(t)\wedge\ddot\sigma(t))}
{\|\dot\sigma(t)\wedge\sigma(t)\|\cdot\|\dot\sigma(t)\|^3}\,.
$$
If $\Theta ds$ is integrable, we set
$$\Theta[\sigma]:=\int_{-\infty}^\infty\Theta(\sigma)ds=\int_{-\infty}^\infty
\frac{(\dot\sigma(t)\wedge\sigma(t),\dot\sigma(t)\wedge\ddot\sigma(t))}
{\|\dot\sigma(t)\wedge\sigma(t)\|\cdot\|\dot\sigma(t)\|^2}dt\,.
$$
\end{definition}
We use the Cauchy-Schwarz inequality to see
\begin{equation}\label{E1.c}
|\Theta_\sigma(t)|\le\kappa_\sigma(t)\quad\text{ so }\quad
|\Theta|[\sigma]\le\kappa[\sigma]\,.
\end{equation}
Consequently, if all the roots of $\mathcal{P}$ are simple and if the real roots are dominant, then
$\Theta[\sigma]:=\int_\sigma\Theta ds$ is well defined. For example, if $\sigma(t)=e^{at}e_1+e^{bt}e_2$
is a curve in $\mathbb{R}^2$ for $a>0>b$,
then
\begin{eqnarray}
&&\Theta(\sigma)ds=\frac{e^{(2a+2b)t}\left((ae_1+be_2)\wedge(e_1+e_2),
(ae_1+be_2)\wedge(a^2e_1+b^2e_2)\right)}
{e^{(a+b)t}\|(ae_1+be_2)\wedge(e_1+e_2)\|\cdot\{a^2e^{2at}+b^2e^{2bt}\}}dt\nonumber\\
&&\phantom{\Theta(\sigma)ds}=e^{(a+b)t}
\frac{\left((a-b)e_1\wedge e_2,(ab^2-a^2b)e_1\wedge e_2\right)}
{\|(a-b)e_1\wedge e_2\|\cdot\{a^2e^{2at}+b^2e^{2bt}\}}dt\label{E1.d}\\
&&\phantom{\Theta(\sigma)ds}=\frac{(a-b)ab(b-a)}{|a-b|}
\frac{e^{(a+b)t}}{a^2e^{2at}+b^2e^{2bt}}dt\,.\nonumber
\end{eqnarray}
Since $a>0>b$, the coefficient is
$|(a-b)ab|>0$ and $\Theta[\sigma]=\kappa[\sigma]>0$.

If $P_1$ and $P_2$ are admissible, then $|\scalar|[\Sigma]$ is finite and we set
$\scalar[\Sigma]:=\int_\Sigma\scalar\operatorname{dvol}$.
We will use the Gauss-Bonnet theorem to establish the following result
in Section~\ref{S5}:

\begin{theorem}\label{T1.11}
If all the roots of $\mathcal{P}_1$ and of $\mathcal{P}_2$ are simple and
if the real roots are dominant, then
$0=\scalar[\Sigma]-2\Theta[\sigma_1]-2\Theta[\sigma_2]+2\pi$.
\end{theorem}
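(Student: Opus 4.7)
The plan is to apply the Gauss--Bonnet theorem with corners to the compact region $R_T := \Sigma([-T,T]^2)$ and let $T \to \infty$. Since $\Sigma$ is a proper embedding of $\mathbb{R}^2$ by Theorem~\ref{T1.7}, $R_T$ is a topological disk with Euler characteristic $1$, so
$$\int_{R_T} K\,\operatorname{dvol} + \int_{\partial R_T}\kappa_g\, ds + \sum_{i=1}^{4}\theta_i \;=\; 2\pi,$$
where $\kappa_g$ is the geodesic curvature computed with the inward unit normal (and with respect to the counterclockwise orientation of $\partial R_T$) and $\theta_i$ are the exterior angles at the four corners $\Sigma(\pm T,\pm T)$. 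The bulk integral converges to $\scalar[\Sigma]$ by the exponential pointwise bound in Theorem~\ref{T1.8} and dominated convergence, so the remaining work lies in the boundary and corner terms.

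The corner analysis rests on the dominance of the real roots. As $T \to \infty$ one has $\sigma_1(T)\sim e^{r_1 T}u_1$ and $\sigma_2(T)\sim e^{s_1 T}v_1$, with analogous formulas at $-T$ using $r_k$ and $s_\ell$. At each corner the coordinate vectors $\partial_1\Sigma = \dot\sigma_1\otimes\sigma_2$ and $\partial_2\Sigma = \sigma_1\otimes\dot\sigma_2$ become asymptotic to scalar multiples of the same rank-one tensor $u\otimes v$; computing
$$\cos(\text{interior angle}) \;=\; \frac{(\partial_1\Sigma,\partial_2\Sigma)}{\|\partial_1\Sigma\|\,\|\partial_2\Sigma\|} \;=\; \frac{(\sigma_1,\dot\sigma_1)(\sigma_2,\dot\sigma_2)}{\|\sigma_1\|\|\dot\sigma_1\|\|\sigma_2\|\|\dot\sigma_2\|}$$
and inserting the leading asymptotics shows this ratio tends to $1$, so each interior angle tends to $0$ and each exterior angle to $\pi$, giving $\sum_i\theta_i \to 4\pi$.

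For the boundary, consider the top side $\alpha(t_1):=\sigma_1(t_1)\otimes\sigma_2(T)$ for $t_1\in[-T,T]$. I would construct the inward tangent-plane unit normal $n$ perpendicular to $\dot\alpha$, compute $\kappa_g = (\ddot\alpha,n)/\|\dot\alpha\|^2$, and expand inner products of tensor products via the identity $(A\wedge B, C\wedge D)=(A,C)(B,D)-(A,D)(B,C)$. This yields, up to an orientation sign,
$$\kappa_g\, ds \;=\; \pm\,\frac{(\sigma_2(T),\dot\sigma_2(T))\,(\dot\sigma_1\wedge\sigma_1,\dot\sigma_1\wedge\ddot\sigma_1)}{\|\dot\sigma_1\|^2\sqrt{AB-C^2}}\, dt_1,$$
where $A=\|\dot\sigma_1\|^2\|\sigma_2\|^2$, $B=\|\sigma_1\|^2\|\dot\sigma_2\|^2$, $C=(\sigma_1,\dot\sigma_1)(\sigma_2,\dot\sigma_2)$. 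The algebraic identity $AB-C^2 = \|\dot\sigma_1\wedge\sigma_1\|^2\|\sigma_2\|^2\|\dot\sigma_2\|^2 + (\sigma_1,\dot\sigma_1)^2\|\dot\sigma_2\wedge\sigma_2\|^2$ combined with $(\sigma_2,\dot\sigma_2)\sim s_1 e^{2s_1 T}$ and $\|\sigma_2\|\|\dot\sigma_2\|\sim |s_1|e^{2s_1 T}$ give $\sqrt{AB-C^2}\sim |s_1|e^{2s_1 T}\|\dot\sigma_1\wedge\sigma_1\|$, so the $T$-dependent exponentials cancel and the integrand converges pointwise to $\pm(\dot\sigma_1\wedge\sigma_1,\dot\sigma_1\wedge\ddot\sigma_1)/(\|\dot\sigma_1\wedge\sigma_1\|\cdot\|\dot\sigma_1\|^2)$, precisely the integrand of $\pm\Theta[\sigma_1]$. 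The bound $|\Theta_{\sigma_1}|\le\kappa_{\sigma_1}$ from Equation~(\ref{E1.c}) together with Theorem~\ref{T1.2} supplies the uniform $L^1$ dominator needed to pass to the limit. Running the analogous argument on the three remaining arcs, and tracking the signs imposed by the counterclockwise orientation together with the signs of $r_1,r_k,s_1,s_\ell$, the total boundary integral converges to $-2\Theta[\sigma_1]-2\Theta[\sigma_2]$. Substituting everything into Gauss--Bonnet yields $\scalar[\Sigma]-2\Theta[\sigma_1]-2\Theta[\sigma_2]+4\pi=2\pi$, which is the desired identity.

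The main obstacle will be the boundary asymptotics: identifying exactly which subleading terms in $\sqrt{AB-C^2}$ survive after the dominant exponentials cancel, and consistently tracking the orientation-dependent signs (counterclockwise boundary, inward normal) together with the signs of the dominant roots on each of the four arcs. A subsidiary technical point is producing $T$-uniform integrable majorants to justify each limit interchange; the dominance hypothesis on the real roots is designed precisely to supply the exponential separation $r_1>\Re(\mu)$, $\Re(\mu)>r_k$, etc., that controls every subleading correction.
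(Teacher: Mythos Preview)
Your approach is essentially identical to the paper's: apply Gauss--Bonnet on $\Sigma([-T,T]^2)$, show the interior angles at the four corners tend to~$0$ (so the exterior-angle sum tends to~$4\pi$), and show that the geodesic-curvature integral along each horizontal edge tends to $-\Theta[\sigma_1]$ (and along each vertical edge to $-\Theta[\sigma_2]$). The paper packages the boundary computation as Lemma~\ref{L5.1}, invoking the formula $\kappa_g(\gamma_1)\,ds=\mp(\Sigma_{/1}\wedge\Sigma_{/2},\Sigma_{/1}\wedge\Sigma_{/11})g^{-1}\|\Sigma_{/1}\|^{-2}\,dt$ from Lemma~\ref{L2.1} and the asymptotic $\sigma_2(t_2)\sim e^{s_1t_2}f_1$; your explicit algebraic identity $AB-C^2=\|\dot\sigma_1\wedge\sigma_1\|^2\|\sigma_2\|^2\|\dot\sigma_2\|^2+(\sigma_1,\dot\sigma_1)^2\|\dot\sigma_2\wedge\sigma_2\|^2$ is just an unpacking of that same wedge-product computation, and in fact yields the uniform majorant $|\kappa_g\,ds|\le \kappa_{\sigma_1}\,ds_1$ more transparently than the paper (which asserts the limit in Lemma~\ref{L5.1} without spelling out the domination).
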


The 4-dimensional setting is particularly tractable. We will establish the following result in Section~\ref{S6}:

\begin{theorem}\label{T1.12}
If all the roots of $\mathcal{P}_1$ and of $\mathcal{P}_2$ are simple, if the real roots are dominant,  and
if $n_1=n_2=2$, then
$\scalar[\Sigma]=0$.
\end{theorem}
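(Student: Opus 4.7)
The plan is to reduce the theorem to evaluating the one-dimensional integrals $\Theta[\sigma_i]$ via Theorem~\ref{T1.11}. Since that theorem gives $\scalar[\Sigma]=2\Theta[\sigma_1]+2\Theta[\sigma_2]-2\pi$, the entire problem collapses to showing $\Theta[\sigma_1]=\Theta[\sigma_2]=\pi/2$.

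First I would unpack the hypothesis into a concrete form. Because a dominant root is necessarily real and $n_i=2$, each $\mathcal{P}_i$ cannot have complex roots, and requiring both real roots to be dominant forces them to be of opposite sign. Consequently each associated curve is precisely of the form $\sigma(t)=e^{at}e_1+e^{bt}e_2$ with $a>0>b$ that has already been expanded in Equation~(\ref{E1.d}).

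The final step is to evaluate
$$\Theta[\sigma]=\int_{-\infty}^{\infty}\frac{-ab(a-b)\,e^{(a+b)t}}{a^2 e^{2at}+b^2 e^{2bt}}\,dt,$$
read off from Equation~(\ref{E1.d}) (where the coefficient $(a-b)ab(b-a)/|a-b|$ simplifies to $-ab(a-b)$). The substitution $u=(|b|/a)\,e^{(b-a)t}$ brings $a^2 e^{2at}+b^2 e^{2bt}$ into the form $a^2 e^{2at}(1+u^2)$, while $dt=du/((b-a)u)$; after cancellation the integrand collapses to $-du/(1+u^2)$. Since $b-a<0$ the variable $u$ sweeps monotonically from $+\infty$ down to $0$ as $t$ runs over $\mathbb{R}$, so the integral becomes $\int_{0}^{\infty}du/(1+u^2)=\pi/2$. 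Substituting back into Theorem~\ref{T1.11} gives $\scalar[\Sigma]=\pi+\pi-2\pi=0$.

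The only mildly delicate point is the sign bookkeeping inside the substitution, since $b<0$ forces absolute values into intermediate expressions; however, the coefficient $-ab(a-b)$ is positive precisely because $ab<0$, and the paragraph following Equation~(\ref{E1.d}) already asserts $\Theta[\sigma]>0$, which provides a useful consistency check. A further geometric sanity check is that the planar curve $(e^{at},e^{bt})$ traces a convex arc in the open first quadrant whose unit tangent rotates through exactly a quarter turn as $t$ traverses $\mathbb{R}$, matching the value $\pi/2$ obtained analytically.
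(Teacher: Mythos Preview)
Your proof is correct and follows essentially the same route as the paper: reduce to Theorem~\ref{T1.11}, observe that the hypotheses force each $\sigma_i$ to be of the form $e^{at}e_1+e^{bt}e_2$ with $a>0>b$, and then evaluate $\Theta[\sigma]=\pi/2$ by a substitution that reduces the integral to $\int_0^\infty du/(1+u^2)$. The only cosmetic difference is that the paper performs the change of variables in two steps ($x=e^{(a-b)t}$, then $y=\tfrac{|a|}{|b|}x$) while you do it in one; your $u$ is simply the reciprocal of the paper's $y$.
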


\medbreak\noindent{\bf Note:} In Example~\ref{Ex8.3}, we will present an example where $n_1=n_2=2$
and where $|K|[\Sigma]\ne0$ so this result is non-trivial.

Although Theorem~\ref{T1.8} shows $\scalar[\Sigma]$ is well defined, it does not provide
a useful upper bound for the total Gauss curvature of $\Sigma$. Suppose the roots of $\mathcal{P}_1$
and $\mathcal{P}_2$ are real, simple, and dominant. This means that
\begin{equation}\label{E1.e}
\begin{array}{l}
\sigma_1(t_1)=(e^{r_1t_1},\dots,e^{r_kt_1})\text{ for }r_1>0>r_k,\\
\sigma_2(t_2)=(e^{s_1t_2},\dots,e^{s_\ell t_2})\text{ for }s_1>0>s_\ell\,.
\end{array}\end{equation}
In this setting, we combine
Theorem~\ref{T1.4}, Equation~(\ref{E1.c}), and Theorem~\ref{T1.11} to obtain:

\begin{theorem}\label{T1.13}
If the roots of $\mathcal{P}_1$
and $\mathcal{P}_2$ are real and simple, and if the real roots are dominant, then
$
|\scalar|[\Sigma]\le 2\pi+4n_1(n_1-1)+4n_2(n_2-1)\,.
$
\end{theorem}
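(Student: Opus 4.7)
The strategy is exactly the one the authors indicate: splice together Theorem~\ref{T1.11}, the Cauchy-Schwarz bound~(\ref{E1.c}), and Theorem~\ref{T1.4}. Since the roots of $\mathcal{P}_1$ and $\mathcal{P}_2$ are assumed to be real, simple, and dominant, the curves $\sigma_1,\sigma_2$ take the form displayed in Equation~(\ref{E1.e}), and all three results apply at once.

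First, I would apply Theorem~\ref{T1.11} to get the Gauss-Bonnet identity
$$\scalar[\Sigma]=2\Theta[\sigma_1]+2\Theta[\sigma_2]-2\pi,$$
and then take absolute values via the triangle inequality to obtain
$$|\scalar[\Sigma]|\le 2\pi+2|\Theta[\sigma_1]|+2|\Theta[\sigma_2]|.$$
Next, the pointwise estimate in Equation~(\ref{E1.c}) integrates to $|\Theta[\sigma_i]|\le|\Theta|[\sigma_i]\le\kappa[\sigma_i]$ for $i=1,2$. Finally, because the curves $\sigma_i$ are precisely those studied in Theorem~\ref{T1.4}, that theorem yields $\kappa[\sigma_1]\le 2n_1(n_1-1)$ and $\kappa[\sigma_2]\le 2n_2(n_2-1)$. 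Chaining the three estimates produces
$$|\scalar[\Sigma]|\le 2\pi+4n_1(n_1-1)+4n_2(n_2-1),$$
which is the desired bound.

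There is essentially no obstacle; the proof is a direct concatenation of previously established results, and the only bookkeeping is to verify that the single hypothesis of Theorem~\ref{T1.13} simultaneously satisfies the hypotheses of Theorems~\ref{T1.4} and~\ref{T1.11}, which it does. One minor interpretive point: the argument bounds the signed quantity $|\scalar[\Sigma]|$, so the notation $|\scalar|[\Sigma]$ in the statement is to be read in this sense; if instead one wanted the $L^1$-norm $\int_\Sigma|\scalar|\,\operatorname{dvol}$, one would need a supplementary observation that $\scalar$ does not change sign on $\Sigma$ under these hypotheses, but the combination of ingredients flagged by the authors suggests the first interpretation is intended.
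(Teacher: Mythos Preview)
Your proof is correct and is exactly the argument the paper gives: the theorem is stated immediately after the sentence ``In this setting, we combine Theorem~\ref{T1.4}, Equation~(\ref{E1.c}), and Theorem~\ref{T1.11} to obtain,'' and no further proof is supplied. Your interpretive caveat is also well taken---the chain of inequalities bounds $|\scalar[\Sigma]|$ rather than $\int_\Sigma|\scalar|\,\operatorname{dvol}$, and since Example~\ref{Ex8.5} shows $\scalar$ changes sign under these hypotheses, the notation $|\scalar|[\Sigma]$ in the statement should indeed be read as the former.
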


\begin{remark}\rm We will show in Example~\ref{Ex8.5} that this result is non-trivial;
$\scalar[\Sigma]$ does not vanish identically if $n_1>2$ and $n_2>2$. Furthermore, we will
give a family of surfaces $\Sigma_k$ in $\mathbb{R}^8$ where one of the sub-dominant roots
is complex where $\lim_{k\rightarrow\infty}\scalar[\Sigma_k]=-\infty$ so there is no
universal bound in this setting.
\end{remark}

Let $L_{ij}$ be the second fundamental form; this is vector valued and takes values in $T\Sigma^\perp$
(see Section~\ref{S2} for details). The {\it mean curvature} vector $H$ is given by:
$$H=g^{ij}L_{ij}\in T\Sigma^\perp\,.$$
The surface is minimal if and only if $H=0$. In Section~\ref{S7} we will show the surface is asymptotically
minimal if there are at least two positive and at least two negative roots for each ODE:
\begin{theorem}\label{T1.15}
Assume that the roots of $\mathcal{P}_1$
and $\mathcal{P}_2$ are real and simple, and that the real roots are dominant.
Assume that $r_1>r_2>0>r_{k-1}>r_k$ and that
$s_1>s_2>0>s_{\ell-1}>s_\ell$.
\begin{enumerate}
\item There exists $\epsilon=\epsilon(\Sigma)>0$ and $C=C(\Sigma)>0$ so $\|H\|\le Ce^{-\epsilon\|(t_1,t_2)\|}$.
\item $H\in L^3(\Sigma,\operatorname{dvol})$.
\end{enumerate}\end{theorem}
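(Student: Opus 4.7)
The plan is to derive explicit closed-form expressions for $g$ and for the unprojected form of $gH$, use a convenient algebraic identity relating them to reduce the problem to a single scalar inequality, and then carry out an asymptotic analysis in each of the four quadrants at infinity. Writing $E_{ij}:=e_i\otimes f_j$ and $\alpha_{ij}(t_1,t_2):=e^{r_it_1+s_jt_2}$, one has $\partial_1^a\partial_2^b\Sigma=\sum_{i,j}r_i^as_j^b\alpha_{ij}E_{ij}$, and a direct expansion using $g_{11}=\sum r_i^2\alpha_{ij}^2$, $g_{12}=\sum r_is_j\alpha_{ij}^2$, $g_{22}=\sum s_j^2\alpha_{ij}^2$ produces the identity
$$g_{22}r_{i_0}^2-2g_{12}r_{i_0}s_{j_0}+g_{11}s_{j_0}^2=\sum_{i,j}\alpha_{ij}^2(r_{i_0}s_j-r_is_{j_0})^2.$$
Setting $V:=g_{22}\partial_1^2\Sigma-2g_{12}\partial_1\partial_2\Sigma+g_{11}\partial_2^2\Sigma$ (so $gH=V^\perp$) and $C_{i_0j_0}:=\sum_{i,j}\alpha_{ij}^2(r_{i_0}s_j-r_is_{j_0})^2$, this gives
$$V=\sum_{i_0,j_0}\alpha_{i_0j_0}C_{i_0j_0}E_{i_0j_0},\qquad g=\tfrac12\sum_{i_0,j_0}\alpha_{i_0j_0}^2C_{i_0j_0}.$$
Since $(r_{i_0}s_j-r_is_{j_0})^2\le M$ for a constant $M$ depending only on the roots, $C_{i_0j_0}\le M\|\sigma_1\|^2\|\sigma_2\|^2$, whence $\|V\|^2=\sum\alpha_{i_0j_0}^2C_{i_0j_0}^2\le 2Mg\,\|\sigma_1\|^2\|\sigma_2\|^2$, and combined with $\|gH\|\le\|V\|$ this yields the key bound $\|H\|^2\le 2M\|\sigma_1\|^2\|\sigma_2\|^2/g$.

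The asymptotic analysis reduces, via the symmetry $t_j\mapsto -t_j$ (which reverses each set of roots and preserves the hypotheses), to the first quadrant $t_1,t_2\ge0$. There $\|\sigma_1\|^2\|\sigma_2\|^2\le c_1e^{2r_1t_1+2s_1t_2}$, and retaining only the pairs $\{(1,1),(2,1)\}$ and $\{(1,1),(1,2)\}$ in the sum for $g$ (both nonzero because $r_2>0$ and $s_2>0$) gives the lower bound
$$g\ge c_2\,e^{4r_1t_1+4s_1t_2}\bigl(e^{-2(r_1-r_2)t_1}+e^{-2(s_1-s_2)t_2}\bigr).$$
Hence $\|\sigma_1\|^2\|\sigma_2\|^2/g$ is bounded by each of $e^{-2r_2t_1-2s_1t_2}$ and $e^{-2r_1t_1-2s_2t_2}$, and so, via the elementary inequality $\min(A,B)\le\sqrt{AB}$, by their geometric mean $e^{-(r_1+r_2)t_1-(s_1+s_2)t_2}$; this gives~(1) with $\epsilon=\tfrac12\min(r_1+r_2,\,s_1+s_2)$. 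For~(2), using $\sqrt{g}\le\|\dot\sigma_1\|\|\sigma_1\|\|\dot\sigma_2\|\|\sigma_2\|\le c_3e^{2r_1t_1+2s_1t_2}$ one obtains $\|H\|^3\sqrt{g}\le(2M)^{3/2}\|\sigma_1\|^3\|\sigma_2\|^3/g$, and the same averaging of the two lower bounds on $g$ yields $\|H\|^3\sqrt{g}\le c_4\,e^{-r_2t_1-s_2t_2}$, which is integrable over the first quadrant.

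The principal obstacle is the asymptotic estimate itself: in each of the four quadrants, and on the transitional sectors between them, one must identify the correct dominant terms in $g$ and produce the appropriate lower bound. The hypothesis $r_1>r_2>0>r_{k-1}>r_k$ and its analogue for $\{s_j\}$ enter precisely because the two pairs used to bound $g$ from below rely on the existence of a second root of the appropriate sign; as Example~\ref{Ex8.7} is designed to show, without this assumption the conclusion genuinely fails.
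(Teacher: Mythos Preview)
Your argument is correct and is genuinely different from the paper's. A notational caveat: you use $g$ for $\det(g_{ij})$ and $\sqrt{g}$ for the area density, whereas the paper sets $g:=\sqrt{\det(g_{ij})}$; once this is translated, your formulas $g=\tfrac12\sum\alpha_{i_0j_0}^2C_{i_0j_0}$, $gH=V^\perp$, and the lower bound on $g$ match the paper's $g^2$, $g^2H$, and the Lemma~\ref{L3.1} bound on $g^2$, and your chain of inequalities is consistent. (The parenthetical ``both nonzero because $r_2>0$ and $s_2>0$'' is slightly off---those two cross-terms are nonzero by simplicity of the roots; the positivity of $r_2,s_2$ is used only later, to make $e^{-r_2t_1-s_2t_2}$ integrable.)

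The paper works in the exterior-algebra formalism of Lemma~\ref{L2.1}: it writes $\|H\|=g^{-3}\|\mathfrak{H}\|$ with $\mathfrak{H}=\Sigma_{/1}\wedge\Sigma_{/2}\wedge(g_{11}\Sigma_{/22}+g_{22}\Sigma_{/11}-2g_{12}\Sigma_{/12})$, then classifies the exponential monomials appearing in $\mathfrak{H}$ into three ``types'' to obtain an envelope $\mathcal{H}$, and finally balances the two lower bounds on $g$ via a convex-combination parameter $\delta\in[0,1]$, choosing specific values ($\delta=\tfrac23,\tfrac12,\tfrac78$) to force each exponent negative. Your route bypasses all of this: the algebraic identity $g_{22}r_{i_0}^2-2g_{12}r_{i_0}s_{j_0}+g_{11}s_{j_0}^2=C_{i_0j_0}$ lets you compare $\|V\|^2$ and $\det(g_{ij})$ term-by-term, collapsing the problem to the single scalar inequality $\|H\|^2\le 2M\,\|\sigma_1\|^2\|\sigma_2\|^2/\det(g_{ij})$, and the geometric-mean trick $\min(A,B)\le\sqrt{AB}$ plays the role of the paper's $\delta$-optimization. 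This is shorter and avoids the wedge machinery; the paper's type analysis, on the other hand, also produces matching lower bounds on $\|H\|$ (its Lemma~\ref{L7.1}(2)), which feed into Examples~\ref{Ex8.7}--\ref{Ex8.8}; your upper-bound-only approach does not give those for free.
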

\begin{remark}\rm
In Example~\ref{Ex8.7}, we will show $H$ need not be bounded if $0>r_2$ and $0>s_2$.
Fix $p<3$. In Example~\ref{Ex8.8}, we will exhibit a surface $\Sigma_p$ satisfying the hypotheses of Theorem~\ref{T1.15}
where $H$ does not belong to $L^p$. This shows that $p=3$ is the best universal estimate.
\end{remark}

Throughout this paper, we will let $C=C(\Sigma)$ denote a generic positive constant that
can depend on $\Sigma$ but not on $(t_1,t_2)$.

\section{The geometry of surfaces embedded in $\mathbb{R}^n$}\label{S2}
Let $\Sigma(t_1,t_2)$ be an immersed surface in $\mathbb{R}^n$.
The components $g_{ij}$ of the Riemannian metric and the Riemannian measure
$\operatorname{dvol}$ on $\Sigma$
are defined by setting:
\begin{eqnarray*}
&&g_{ij}:=(\partial_{t_i}\Sigma,\partial_{t_j}\Sigma)\text{ and }
\operatorname{dvol}:=gdt_1dt_2\text{ where }
g:=\sqrt{g_{11}g_{22}-g_{12}g_{12}}\,.
\end{eqnarray*}
Let $\nabla$ be the Levi-Civita connection of $\Sigma$. If $\pi_\Sigma$
denotes orthogonal projection on the tangent space of $\Sigma$ and if $X$ and $Y$ are tangent vector fields
along $\Sigma$, then:
$$\nabla_XY=\pi_\Sigma\{XY(\Sigma)\}\,.$$
The curvature tensor is given by $R(X,Y)Z:=(\nabla_X\nabla_Y-\nabla_Y\nabla_X-\nabla_{[X,Y]})Z$.
The second fundamental form $L(X,Y)$ is defined to be:
\begin{equation}\label{E2.a}
L(X,Y)=(1-\pi_\Sigma)\{XY(\Sigma)\}\,.
\end{equation}
The second fundamental form is vector valued and takes values in $T\Sigma^\perp$.
Let $\{X,Y\}$ be linearly independent tangent vector fields along $\Sigma$.
The Gauss curvature $\scalar$ is given by:
$$\scalar:=(R(X,Y)Y,X)g^{-2}\,.$$
One has the Theorema Egregium of Gauss:
\begin{equation}\label{E2.b}
\scalar=\{(L(X,X),L(Y,Y))-(L(X,Y),L(X,Y))\}g^{-2}\,.
\end{equation}
If $\sigma$ is a curve in $\Sigma$ and if $\nu$ is a unit normal
to $\dot\sigma$ in $\Sigma$,  the geodesic curvature is:
$$\kappa_g(\sigma):=(\nabla_{\dot\sigma}\dot\sigma,\nu)\|\dot\sigma\|^{-2}\,.$$
This vanishes if and only if $\sigma$ is a geodesic and changes sign if we change the sign
of the normal.

We now introduce a convenient
formalism to discuss various geometric quantities in terms of wedge products.
Although the formulas are well-known, we shall give the proofs to establish notation.
Fix a point $(a,b)$ of $\Sigma$. Let $\gamma_1(t_1):=\Sigma(t_1,b)$ and $\gamma_2(t_2):=\Sigma(a,t_2)$ be
the coordinate curves through $(a,b)$. Let
$$
\Sigma_{/i}:=\partial_{t_i}\Sigma,\quad\Sigma_{/ij}:=\partial_{t_i}\partial_{t_j}\Sigma,\quad
\tilde L_{ij}:=
\Sigma_{/1}\wedge\Sigma_{/2}\wedge\Sigma_{/ij}\,.
$$
\begin{lemma}\label{L2.1}
Let $\{e_1,e_2\}$ be an orthonormal frame for $T\Sigma$ so $\Sigma_{/1}\wedge\Sigma_{/2}$ is
a positive multiple of $e_1\wedge e_2$.  Choose the normal to $\gamma_1$ in $\Sigma$
which points in the direction of $\Sigma_{/2}$.
\begin{enumerate}
\item $g=||\Sigma_{/1}\wedge\Sigma_{/2}||$.
\item $\kappa_g(\gamma_1)=(\Sigma_{/1}\wedge\Sigma_{/2},
\Sigma_{/1}\wedge\Sigma_{/11})\cdot g^{-1}\|\Sigma_{/1}\|^{-3}$.
\item $\Sigma_{/1}\wedge\Sigma_{/2}\wedge\Sigma_{/ij}=ge_1\wedge e_2\wedge L_{ij}$.
\item $\scalar=g^{-4}\{(\tilde L_{11},\tilde L_{22})-(\tilde L_{12},\tilde L_{12})\}$.\end{enumerate}
\end{lemma}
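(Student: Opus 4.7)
The plan is to verify the four statements in sequence; each is an elementary computation once translated into wedge notation.

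First, for (1), apply the standard Gram identity $\|u\wedge v\|^2=\|u\|^2\|v\|^2-(u,v)^2$ with $u=\Sigma_{/1}$ and $v=\Sigma_{/2}$ to obtain $\|\Sigma_{/1}\wedge\Sigma_{/2}\|^2=g_{11}g_{22}-g_{12}^2=g^2$, whence (1). For (3), decompose $\Sigma_{/ij}=\pi_\Sigma(\Sigma_{/ij})+L_{ij}$ using Equation~(\ref{E2.a}). The tangential component lies in $T\Sigma=\operatorname{span}(\Sigma_{/1},\Sigma_{/2})$, so wedging it against $\Sigma_{/1}\wedge\Sigma_{/2}$ produces zero (the tangent plane is two-dimensional); only the normal part $L_{ij}$ survives. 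Combining this with (1) and the orientation hypothesis that $\Sigma_{/1}\wedge\Sigma_{/2}$ is a positive multiple of $e_1\wedge e_2$ identifies $\Sigma_{/1}\wedge\Sigma_{/2}=ge_1\wedge e_2$, yielding (3).

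For (2), along $\gamma_1(t_1)=\Sigma(t_1,b)$ one has $\dot\gamma_1=\Sigma_{/1}$ and $\ddot\gamma_1=\Sigma_{/11}$. Gram-Schmidt with respect to $\Sigma_{/1}$ gives the unit normal to $\gamma_1$ in $\Sigma$ pointing toward $\Sigma_{/2}$,
\[
\nu=\frac{\|\Sigma_{/1}\|}{g}\Bigl(\Sigma_{/2}-\frac{(\Sigma_{/1},\Sigma_{/2})}{\|\Sigma_{/1}\|^2}\Sigma_{/1}\Bigr),
\]
where the normalizing factor follows from $g_{22}-g_{12}^2/g_{11}=g^2/\|\Sigma_{/1}\|^2$. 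Since $\nu\in T\Sigma$, orthogonal projection is self-adjoint, so the definition of geodesic curvature yields $\kappa_g(\gamma_1)=(\Sigma_{/11},\nu)\|\Sigma_{/1}\|^{-2}$. Substituting the displayed expression for $\nu$ and recognizing the numerator via the bivector pairing identity $(u\wedge v,w\wedge x)=(u,w)(v,x)-(u,x)(v,w)$ with $u=w=\Sigma_{/1}$, $v=\Sigma_{/2}$, $x=\Sigma_{/11}$ converts the result into the claimed form.

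Finally, (4) follows by combining (3) with the Theorema Egregium, Equation~(\ref{E2.b}), applied to the coordinate frame $X=\Sigma_{/1}$, $Y=\Sigma_{/2}$, so that $L(X,X)=L_{11}$, $L(X,Y)=L_{12}$, $L(Y,Y)=L_{22}$. Since $L_{ij}\perp T\Sigma$, the Gram determinant for $\tilde L_{ij}=ge_1\wedge e_2\wedge L_{ij}$ factors as $(\tilde L_{ij},\tilde L_{kl})=g^2(L_{ij},L_{kl})$; feeding this into (\ref{E2.b}) converts the prefactor $g^{-2}$ into $g^{-4}$ and produces the stated formula.

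No step constitutes a serious obstacle; all four assertions are bookkeeping once the Gram identity and the normal/tangential decomposition are in hand. The only item requiring genuine care is the sign convention in (2): the Gram-Schmidt formula for $\nu$ must have $\Sigma_{/2}$ appearing with a positive coefficient so that $\nu$ points toward $\Sigma_{/2}$ as stipulated, since the opposite choice would reverse the sign of $\kappa_g(\gamma_1)$. An analogous care with orientation in (3) ensures the positive multiple $g$ (rather than $-g$) appears in the identification $\Sigma_{/1}\wedge\Sigma_{/2}=ge_1\wedge e_2$, which in turn makes the $g^{-4}$ prefactor in (4) correct.
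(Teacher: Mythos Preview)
Your proof is correct and follows essentially the same approach as the paper: both verify each assertion by elementary linear algebra, with the only difference being that you invoke the Gram identity and the bivector pairing formula directly, whereas the paper selects an adapted orthonormal frame (writing $\Sigma_{/1}=a_1e_1$, $\Sigma_{/2}=b_1e_1+b_2e_2$, $\Sigma_{/11}=c_1e_1+c_2e_2+c_3e_3$) and computes in those explicit coordinates. The content and level of difficulty are identical.
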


\begin{proof} Fix a point $P\in\Sigma$ and let $\{e_1,e_2\}$ be an orthonormal basis
for $T_P\Sigma$. Complete $\{e_1,e_2\}$ to an orthonormal basis $\{e_1,\dots,e_n\}$ for
$\mathbb{R}^n$. We may assume the basis chosen so that $\Sigma_{/1}=a_1e_1$ and $\Sigma_{/2}=b_1e_1+b_2e_2$
where $b_2>0$. Then
$$
\Sigma_{/1}\wedge\Sigma_{/2}=a_1b_2e_1\wedge e_2\quad\text{ and }\quad
||\Sigma_{/1}\wedge\Sigma_{/2}||^2=a_1^2b_2^2\,.
$$
We show that $g=\|\Sigma_{/1}\wedge\Sigma_{/2}\|$ and establish Assertion~1 by computing:
\begin{eqnarray*}
&&g_{11}=a_1^2,\quad g_{22}=b_1^2+b_2^2,\quad
     g_{12}=a_1b_1,\\
&&g^2=g_{11}g_{22}-g_{12}^2=a_1^2(b_1^2+b_2^2)-a_1^2b_1^2=a_1^2b_2^2\,.
\end{eqnarray*}
With our normalizations, $e_2$ is the normal to $\dot\gamma_1$
in $\Sigma$ which points in the direction of $\Sigma_{/2}$. Further normalize the orthonormal frame so that
$\Sigma_{/11}=c_1e_1+c_2e_2+c_3e_3$.
We prove Assertion~2 by computing:
\begin{eqnarray*}
&&\Sigma_{/1}\wedge\Sigma_{/2}=a_1b_2e_1\wedge e_2=ge_1\wedge e_2,\\
&&\Sigma_{/1}\wedge\Sigma_{/11}
=a_1c_2e_1\wedge e_2+a_1c_3e_1\wedge e_3,\\
&&(\Sigma_{/1}\wedge\Sigma_{/2},\Sigma_{/1}\wedge\Sigma_{/11})
=a_1c_2g,\\
&&\kappa_g(\gamma_1)=c_2a_1^{-2}
=(\Sigma_{/1}\wedge\Sigma_{/2},\Sigma_{/1}\wedge\Sigma_{/11})
a_1^{-3}g^{-1}\,.
\end{eqnarray*}
The second fundamental form $L_{ij}$ of
Equation~(\ref{E2.a}) is
the projection of $\Sigma_{/ij}$
on $T\Sigma^\perp$. Expand
$$\Sigma_{/ij}=\Gamma_{ij}{}^1e_1+\Gamma_{ij}{}^2e_2+L_{ij}{}^3e_3
+\dots+L_{ij}{}^ne_n
$$
where the $\Gamma_{ij}{}^k$ are the Christoffel symbols of the Levi-Civita connection and where
the second fundamental form
is given by
$L_{ij}=L_{ij}{}^3e_3+\dots+L_{ij}{}^ne_n$.
By Assertion~1, $\Sigma_{/1}\wedge\Sigma_{/2}=ge_1\wedge e_2$.
We derive Assertion~3 and Assertion~4 from Equation~(\ref{E2.b}) and complete the proof by computing:
$$\begin{array}{l}
\displaystyle\tilde L_{ij}=ge_1\wedge e_2\wedge\sum_{\nu=1}^nL_{ij}{}^\nu e_\nu=
g\sum_{\nu=3}^nL_{ij}{}^\nu e_1\wedge e_2\wedge e_\nu,\\
\displaystyle(\tilde L_{ij},\tilde L_{kl})=g^2\sum_{\nu=3}^nL_{ij}{}^\nu L_{kl}{}^\nu=g^2(L_{ij},L_{kl}),\\
\displaystyle\scalar=g^{-2}\{(L_{11},L_{22})-(L_{12},L_{12})\}
=g^{-4}\{(\tilde L_{11},\tilde L_{22})-(\tilde L_{12},\tilde L_{12})\}\,.
\end{array}$$
\end{proof}

\section{The proof of Theorem~\ref{T1.7}}\label{Sect-3}
Assume that all the roots of $\mathcal{P}_1$ and of $\mathcal{P}_2$ are simple and that the real roots are dominant.
We adopt the notation of Section~\ref{S1.5} throughout.  We shall concentrate on the first quadrant
$t_1\ge0$ and $t_2\ge0$ for the most part as the remaining quadrants can be handled similarly by reparametrizing
$\Sigma$ to set $\tilde t_i=\pm t_i$ as necessary. Set
\begin{equation}\label{E3.a}\epsilon_1=\epsilon_1(\Sigma)=\min\{r_1,s_1,-r_k,-s_\ell\}>0\,.
\end{equation}
Choose $\alpha_1\in\mathcal{R}_1-\{r_1\}$ so
$a_1:=\Re(\alpha_1)$ is maximal. Similarly, choose
$\beta_1\in\mathcal{R}_2-\{s_1\}$ so $c_1:=\Re(\beta_1)$ is maximal; both $\alpha_1$ and $\beta_1$
are sub-dominant.
Let
\begin{equation}\label{E3.b}
\mathcal{G}(t_1,t_2):=e^{2r_1t_1+(s_1+c_1)t_2}+e^{(r_1+a_1)t_1+2s_1t_2}\,.
\end{equation}
The following estimates are fundamental:

\begin{lemma}\label{L3.1}
Assume that all the roots of $\mathcal{P}_1$ and of $\mathcal{P}_2$ are simple and
that the real roots are dominant.
There exist $C_i=C_i(\Sigma)>0$ so that:
\begin{enumerate}
\item If $t_1\ge 0$ and $t_2\ge 0$, then $C_1\mathcal{G}(t_1,t_2)\le g(t_1,t_2)\le C_2\mathcal{G}(t_1,t_2)$.
\item For any $(t_1,t_2)\in\mathbb{R}^2$,
 $\|\Sigma(t_1,t_2)\|\ge\epsilon_1\|(t_1,t_2)\|$.
\end{enumerate}
\end{lemma}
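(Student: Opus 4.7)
The plan is to establish the two assertions separately, starting with the simpler second.

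For Assertion~(2), the tensor product structure gives $\|\Sigma(t_1,t_2)\|=\|\sigma_1(t_1)\|\,\|\sigma_2(t_2)\|$. I would bound each factor below by picking a single coordinate: among the components of $\sigma_1$ are $e^{r_1 t_1}$ and $e^{r_k t_1}$, so
\[
\|\sigma_1(t_1)\|^2 \ge e^{2r_1 t_1} + e^{2r_k t_1} \ge e^{2\epsilon_1|t_1|}
\]
using $r_1 \ge \epsilon_1$ and $-r_k \ge \epsilon_1$. The symmetric bound holds for $\sigma_2$. Multiplying gives $\|\Sigma(t_1,t_2)\|\ge e^{\epsilon_1(|t_1|+|t_2|)}$, and then $|t_1|+|t_2|\ge\|(t_1,t_2)\|$ together with $e^x\ge x$ yields Assertion~(2).

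For Assertion~(1), I would first derive an algebraic factorization of $g^2$. The tensor product structure gives $g_{11}=\|\dot\sigma_1\|^2\|\sigma_2\|^2$, $g_{22}=\|\sigma_1\|^2\|\dot\sigma_2\|^2$, and $g_{12}=(\sigma_1,\dot\sigma_1)(\sigma_2,\dot\sigma_2)$; substituting $\|u\|^2\|v\|^2=\|u\wedge v\|^2+(u,v)^2$ into $g^2=g_{11}g_{22}-g_{12}^2$ once for $\sigma_1$ and once for $\sigma_2$ produces the two dual expressions
\begin{align*}
g^2 &= \|\sigma_1\wedge\dot\sigma_1\|^2\,\|\sigma_2\|^2\|\dot\sigma_2\|^2+(\sigma_1,\dot\sigma_1)^2\,\|\sigma_2\wedge\dot\sigma_2\|^2,\\
g^2 &= \|\sigma_1\|^2\|\dot\sigma_1\|^2\,\|\sigma_2\wedge\dot\sigma_2\|^2+\|\sigma_1\wedge\dot\sigma_1\|^2\,(\sigma_2,\dot\sigma_2)^2.
\end{align*}
Next I would establish two-sided bounds on $[0,\infty)$ for each building block: $\|\sigma_1\|^2$ and $\|\dot\sigma_1\|^2$ are pinched between positive multiples of $e^{2r_1 t_1}$, $(\sigma_1,\dot\sigma_1)^2\le Ce^{4r_1 t_1}$, and $\|\sigma_1\wedge\dot\sigma_1\|^2$ is pinched between positive multiples of $e^{2(r_1+a_1)t_1}$; analogous statements hold for $\sigma_2$ with $r_1,a_1$ replaced by $s_1,c_1$. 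Upper bounds come from expanding each quantity in the canonical basis and reading off the maximal exponent (simplicity of the roots removes polynomial factors). The lower bound on the wedge norm is the key step: by isolating the coordinate(s) at level $\alpha_1$ and computing directly, one finds that the $e^{r_1 t_1}\wedge e^{\alpha_1 t_1}$-type contribution has leading coefficient a non-oscillating positive constant, equal to $(r_1-a_1)^2$ when $\alpha_1$ is real and to $(r_1-a_1)^2+b^2$ (with $b$ the imaginary part of $\alpha_1$) when $\alpha_1$ is complex.

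Finally I combine the pieces. For the upper bound, substituting the ``$\le$'' bounds into the first identity gives $g^2\le C[e^{2(r_1+a_1)t_1+4s_1 t_2}+e^{4r_1 t_1+2(s_1+c_1)t_2}]\le C'\mathcal{G}^2$. For the lower bound each identity expresses $g^2$ as a sum of non-negative terms: dropping the second summand in the first identity yields $g^2\ge Ce^{2(r_1+a_1)t_1+4s_1 t_2}$, and dropping the second summand in the second identity yields $g^2\ge Ce^{4r_1 t_1+2(s_1+c_1)t_2}$. Averaging these and using $(A+B)^2\le 2(A^2+B^2)$ produces $g^2\ge C''\mathcal{G}^2$. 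The main obstacle is precisely this need to use both dual identities: a single identity cannot deliver a uniform lower bound since its second summand involves $(\sigma_1,\dot\sigma_1)$ or $(\sigma_2,\dot\sigma_2)$, either of which can vanish at interior points.
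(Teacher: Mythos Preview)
Your proof is correct, and the route you take is genuinely different from the paper's. The paper works directly in $\mathbb{R}^{n_1n_2}$: for the lower bound on $g$ it isolates two or three explicit coordinate functions of $\Sigma$ (e.g.\ $e^{r_1t_1+s_1t_2}$ and $e^{r_1t_1+c_1t_2}$) and computes the corresponding $2\times2$ minors by hand, treating the real and complex sub-dominant cases separately; for the upper bound it expands $g^2=\|\Sigma_{/1}\wedge\Sigma_{/2}\|^2$ as a sum over all coordinate pairs and bounds each term, carrying this out only in the all-real case and remarking that the complex case is similar. Your approach instead exploits the tensor-product structure to obtain the closed-form identities
\[
g^2=\|\sigma_1\wedge\dot\sigma_1\|^2\|\sigma_2\|^2\|\dot\sigma_2\|^2+(\sigma_1,\dot\sigma_1)^2\|\sigma_2\wedge\dot\sigma_2\|^2
\]
and its dual, reducing the two-sided estimate on $g$ to one-variable estimates on $\|\sigma_i\|$, $\|\dot\sigma_i\|$, and $\|\sigma_i\wedge\dot\sigma_i\|$. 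This is cleaner: it treats real and complex sub-dominant roots uniformly, gives the upper bound in full generality without extra casework, and makes transparent why two separate lower bounds are needed (each identity has a summand that can vanish). The paper's coordinate argument, by contrast, is more elementary in that it requires no preliminary algebraic identity, and it mirrors the style of the later estimates on $\tilde L_{ij}$ in Section~\ref{S4}. For Assertion~(2) both arguments are essentially the same, differing only in whether one uses $\|\Sigma\|=\|\sigma_1\|\,\|\sigma_2\|$ or picks a single coordinate of $\Sigma$ directly.
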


\begin{proof}
Assertion~1 will show that $g$ and $\mathcal{G}$ grow at approximately the same rate on the first
quadrant. We begin the proof of Assertion~1 by estimating $g$ from below.
Suppose first that $\beta_1=c_1$ is real. We consider two of the coordinate functions which define $\Sigma$,
$\{\psi_1(t_1,t_2):=e^{r_1t_1}e^{s_1t_2},\ \psi_2(t_1,t_2):=e^{r_1t_1}e^{c_1t_2}\}$.
We use Lemma~\ref{L2.1} to estimate:
\begin{eqnarray*}
g&=&\|\Sigma_{/1}\wedge\Sigma_{/2}\|
\ge\left|\partial_{t_1}\psi_1\cdot\partial_{t_2}\psi_2-\partial_{t_1}\psi_2\cdot\partial_{t_2}\psi_1\right|\\
&=&r_1(s_1-c_1)e^{2r_1t_1+(s_1+c_1)t_2}\,.
\end{eqnarray*}
If, on the other hand, $\beta_1=c_1+d_1\sqrt{-1}$ for $d_1\ne0$,
then we consider the three coordinate functions:
$$\begin{array}{ll}
\psi_1(t_1,t_2):=e^{r_1t_1+s_1t_2},&\psi_2(t_1,t_2):=e^{r_1t_1+c_1t_2}\cos(d_1t_2),\\
\psi_3(t_1,t_2):=e^{r_1t_1+c_1t_2}\sin(d_1t_2)
\end{array}$$
and estimate similarly
\begin{eqnarray*}
g&\ge&\sum_{1\le i<j\le 3}
\left\{(\partial_{t_1}\psi_i\cdot\partial_{t_2}\psi_j-\partial_{t_1}\psi_j\cdot\partial_{t_2}\psi_i)^2\right\}^{1/2}\\
&\ge& r_1(s_1-c_1)e^{2r_1t_1+(s_1+c_1)t_2}\,.
\end{eqnarray*}
We have shown $g\ge Ce^{2r_1t_1+(s_1+c_1)t_2}$ for some $C$.
By reducing $C$ if necessary, we have similarly that $g\ge Ce^{(r_1+a_1)t_1+2s_1t_2}$. We average
these two estimates to establish the lower bound of Assertion~1 by showing:
$$
g\ge\textstyle\frac12C\mathcal{G}(t_1,t_2)\,.
$$

To establish the upper estimate of Assertion~1, we shall assume, for the sake of simplicity, that
all the roots are real as that is the case in which we shall use it; the general case can be dealt with using
the arguments above. The coordinate functions of $\Sigma$ take the form $\phi_{ij}(t_1,t_2)=e^{r_it_1+s_jt_2}$. Then
\begin{eqnarray*}
g^2&=&\|\Sigma_{/1}\wedge\Sigma_{/2}\|^2=
\textstyle\frac12\sum_{(i,j)\ne(a,b)}
\{\partial_{t_1}\phi_{ij}\partial_{t_2}\phi_{ab}-\partial_{t_1}\phi_{ab}\partial_{t_2}\phi_{ij}\}^2\\
&=&\textstyle\frac12\sum_{(i,j)\ne(a,b)}e^{2(r_i+r_a)t_1+2(s_j+s_b)t_2}(r_is_b-r_as_j)^2\,.
\end{eqnarray*}
If $i=a$, then $j\ne b$. Choose the notation so $1\le j<b$. We then have that
$2r_i+2r_a\le 4r_1$ and $2s_j+2s_b\le2s_1+2s_2$.
Thus we may bound
\begin{equation}\label{E3.c}
\{\partial_{t_1}\phi_{ij}\partial_{t_2}\phi_{ab}-\partial_{t_1}\phi_{ab}\partial_{t_2}\phi_{ij}\}^2
\le Ce^{4r_1t_1+(2s_1+2s_2)t_2}\,.
\end{equation}
On the other hand, if $i\ne a$,
choose the notation so that $1\le i<a$.
We then have $2r_i+2r_a\le 2r_1+2r_2$ and $2s_j+2s_b\le 4s_1$. The upper bound of Assertion~1
then follows Equation~(\ref{E3.c}) and from the estimate:
$$
\{\partial_{t_1}\phi_{ij}\partial_{t_2}\phi_{ab}-\partial_{t_1}\phi_{ab}\partial_{t_2}\phi_{ij}\}^2
\le Ce^{(2r_1+2r_2)t_1+4s_1t_2}\,.
$$

Suppose $t_1\ge0$ and $t_2\ge0$. Since $r_1>0$ and $s_1>0$, we may estimate
\begin{eqnarray*}
\|\Sigma(t_1,t_2)\|^2&\ge&e^{2r_1t_1+2s_1t_2}\ge\textstyle\frac12(2r_1t_1+2s_1t_2)^2\\
&\ge&2r_1^2t_1^2+2s_1^2t_2^2
\ge\epsilon_1^2\|(t_1,t_2)\|^2\,.
\end{eqnarray*}
Assertion~2 then follows for $t_1\ge0$ and $t_2\ge0$. We set $\tilde
t_i=\pm t_i$ as appropriate to reparametrize $\Sigma$ and
establish Assertion~2 in the remaining quadrants.\end{proof}

By Lemma~\ref{L3.1}, $g>0$. This implies $\Sigma$ is an immersion. We show that $\Sigma$ has infinite volume
by estimating
\begin{eqnarray*}
\operatorname{vol}(\Sigma)&=&\int_{-\infty}^\infty\int_{-\infty}^\infty g(t_1,t_2)dt_1dt_2\\
&\ge&
\int_{-\infty}^\infty\int_{-\infty}^\infty
r_1(s_1-c_1)e^{2r_1t_1+(s_1+c_1)t_2}dt_1dt_2=\infty\,.
\end{eqnarray*}
Let $C$ be a compact subset of $\mathbb{R}^n$. Since
$\Sigma$ is continuous and $C$ is closed, $\Sigma^{-1}(C)$ is closed. Since $C$ is compact, $C$ is bounded so we can find $R$
so that $\|C\|\le R$. Thus if $(t_1,t_2)\in\Sigma^{-1}(C)$, then $\epsilon_1\|(t_1,t_2)\|\le\|\Sigma(t_1,t_2)\|\le R$.
This shows that $\Sigma^{-1}(C)$ is bounded and hence, being closed, is compact. Since the inverse image of a compact
set is compact, $\Sigma$ is a proper map.

Let $\sigma(u)=\Sigma(t_1(u),t_2(u))$ be a unit speed geodesic in $\Sigma$. Then $\|\dot\sigma\|=1$
and $\ddot\sigma(u)\perp T_{\sigma(u)}\Sigma$.
%(
Choose a maximal domain $[0,u_0)$ for $\sigma$. Suppose $u_0<\infty$.
%]
As $\sigma$ is a unit speed curve in $\mathbb{R}^n$,
$$
\|\sigma(0)-\sigma(u)\|\le u_0\quad\text{so}\quad \|\sigma(u)\|\le\|\sigma(0)\|+u_0
$$
 for $u<u_0$. We use Lemma~\ref{L3.1} to see that
$$
\epsilon_1\|(t_1(u),t_2(u))\|\le\|\sigma(u)\|\le u_0+\|\sigma(0)\|\,.
$$
Since $(t_1(u),t_2(u))$ is uniformly bounded, we may
choose a sequence of values $u_n$ which converge to $u_0$
so that $\{t_1(u_n)\}$ and $\{t_2(u_n)\}$ are convergent sequences, i.e. so that for some $(t_1^0,t_2^0)$
we have that:
$$\lim_{n\rightarrow\infty}(t_1(u_n),t_2(u_n))=(t_1^0,t_2^0)\,.$$
Since $\Sigma$ is continuous, this implies $\lim_{n\rightarrow\infty}\sigma(u_n)$ exists and belongs to
$\Sigma$. This implies that $\sigma$ can be extended smoothly beyond the limiting value of $u_0$;
this contradiction shows $\Sigma$ is geodesically complete.

Let $\Psi_1(t_1,t_2):=e^{r_1t_1+s_1t_2}$ and $\Psi_2(t_1,t_2):=e^{r_kt_1+s_1t_2}$ be two
of the coordinate functions of $\Sigma$.
Suppose that $\Sigma(t_1,t_2)=\Sigma(\tilde t_1,\tilde t_2)$. Then $\Psi_1(t_1,t_2)=\Psi_1(\tilde t_1,\tilde t_2)$
and $\Psi_2(t_1,t_2)=\Psi_2(\tilde t_1,\tilde t_2)$. Consequently:
$$e^{(r_1-r_k)t_1}=\Psi_1(t_1,t_2)\Psi_2(t_1,t_2)^{-1}=\Psi_1(\tilde t_1,\tilde t_2)\Psi_2(\tilde t_1,\tilde t_2)^{-1}
=e^{(r_1-r_k)\tilde t_1}\,.$$
Since $r_1-r_k>0$, we conclude $t_1=\tilde t_1$. A similar argument shows $t_2=\tilde t_2$ so $\Sigma$ is 1-1.
This completes the proof of Theorem~\ref{T1.7}.\hfill\qed

\begin{remark}\rm It is possible to prove Theorem~\ref{T1.7} under somewhat weaker assumptions.
If we assume there exist roots $\lambda_1,\lambda_2\in\mathcal{R}_1$
and $\lambda_3,\lambda_4\in\mathcal{R}_2$
so that $\Re(\lambda_1)>0>\Re(\lambda_2)$ and $\Re(\lambda_3)>0>\Re(\lambda_4)$,
then Theorem~\ref{T1.7}
continues to hold. We omit details in the interests of brevity.
\end{remark}

\section{The proof of Theorem~\ref{T1.8}}\label{S4}
We now examine the Gauss curvature $\scalar$.
We suppose $t_1\ge0$ and $t_2\ge0$ as the remaing 3 quadrants can be
handled similarly. We begin with the following estimate:

\begin{lemma}\label{L4.1}
There exists a constant $C=C(\Sigma)$ so that if $t_1\ge0$ and $t_2\ge0$ then:
\begin{enumerate}
\item $(\tilde L_{11},\tilde L_{22})\le Ce^{(4r_1+2a_1)t_1+(4s_1+2c_1)t_2}$.
\item $(\tilde L_{12},\tilde L_{12})\le Ce^{(4r_1+2a_1)t_1+(4s_1+2c_1)t_2}$.
\end{enumerate}\end{lemma}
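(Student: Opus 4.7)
The strategy is to view each $\tilde L_{ij}$ as a 3-form on $\mathbb{R}^n=\mathbb{R}^{n_1}\otimes\mathbb{R}^{n_2}$ and to expand it in the natural basis of $\Lambda^3(\mathbb{R}^n)$ indexed by pairs $(\alpha,\beta)$. The Plücker coordinate of $\tilde L_{ij}$ at the triple $((\alpha_m,\beta_m))_{m=1,2,3}$ is the $3\times 3$ minor of the matrix $M^{ij}$ whose columns are $\Sigma_{/1},\Sigma_{/2},\Sigma_{/ij}$, restricted to the three rows indexed by those pairs. Because $\Sigma(t_1,t_2)=\sigma_1(t_1)\otimes\sigma_2(t_2)$, the entry at row $(\alpha,\beta)$ factors as a function of $t_1$ (a derivative of $\phi_{\alpha,1}$) times a function of $t_2$ (a derivative of $\phi_{\beta,2}$). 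Since each $\phi_{\alpha,1}$ and all its derivatives are dominated by $Ce^{\tilde r_\alpha t_1}$, where $\tilde r_\alpha$ is the real part of the root associated to $\phi_{\alpha,1}$ (and similarly $\tilde s_\beta$ for $\phi_{\beta,2}$), each minor is bounded on the first quadrant by $Ce^{(\sum_m\tilde r_{\alpha_m})t_1+(\sum_m\tilde s_{\beta_m})t_2}$.

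The crucial step is a linear-dependence argument that forces certain minors to vanish. In $M^{11}$, columns 1 and 3 carry no $t_2$-derivative: at row $(\alpha,\beta)$ they equal $\phi'_{\alpha,1}(t_1)\phi_{\beta,2}(t_2)$ and $\phi''_{\alpha,1}(t_1)\phi_{\beta,2}(t_2)$. If all three rows share a common index $\alpha$, then as vectors in $\mathbb{R}^3$ columns 1 and 3 become scalar multiples (with scalars $\phi'_{\alpha,1}(t_1)$ and $\phi''_{\alpha,1}(t_1)$) of the common vector $(\phi_{\beta_m,2}(t_2))_{m=1,2,3}$, so the determinant vanishes. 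The same argument applied to $M^{22}$ forces a nonvanishing minor to have the $\beta_m$'s not all equal, and applied to $M^{12}$ forces both conditions. This column-proportionality argument works uniformly whether the associated root is real or complex.

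Since the $r_1$-index is unique (roots are simple), at most two of the three $\alpha_m$'s can attain rate $r_1$; a third distinct index then satisfies $\tilde r\le a_1$, so the maximum of $\tilde r_{\alpha_1}+\tilde r_{\alpha_2}+\tilde r_{\alpha_3}$ over triples not all equal is $2r_1+a_1$. Analogously $\tilde s_{\beta_1}+\tilde s_{\beta_2}+\tilde s_{\beta_3}\le 2s_1+c_1$, and both maxima can be attained jointly (take rows $(1,1),(1,\beta_0),(\alpha_0,1)$ with $\alpha_0,\beta_0$ subdominant). For assertion (2), expand $(\tilde L_{12},\tilde L_{12})=\sum_I(\det M_I^{12})^2$ over the finitely many 3-subsets $I$: every nonvanishing summand satisfies both constraints and is bounded by $Ce^{(4r_1+2a_1)t_1+(4s_1+2c_1)t_2}$. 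For assertion (1), $(\tilde L_{11},\tilde L_{22})=\sum_I\det M_I^{11}\cdot\det M_I^{22}$; a summand is nonzero only when the $\alpha$-constraint from $M^{11}$ and the $\beta$-constraint from $M^{22}$ both hold, which is the same joint constraint and yields the same bound.

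The main obstacle is that a direct Cauchy-Schwarz estimate $|(\tilde L_{11},\tilde L_{22})|\le\|\tilde L_{11}\|\cdot\|\tilde L_{22}\|$ is strictly weaker, because $\|\tilde L_{11}\|^2$ individually contains minors with all $\beta_m=1$ (so its $t_2$-growth alone can reach $e^{6s_1t_2}$ after squaring) and symmetrically for $\|\tilde L_{22}\|^2$. The sharp bound on the inner product uses crucially that these maximal-in-$t_2$ contributions to $M^{11}$ pair in the Plücker expansion with vanishing minors of $M^{22}$ and therefore drop from the sum; recognizing this joint constraint, and verifying the column-dependence cleanly in the complex-root case, is the conceptual heart of the argument.
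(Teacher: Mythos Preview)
Your proof is correct and follows essentially the same approach as the paper's own argument: both expand $\tilde L_{ij}$ in the basis of decomposable 3-forms on $\mathbb{R}^{n_1}\otimes\mathbb{R}^{n_2}$, use the orthogonality of that basis to pair terms in the inner product, and observe that the relevant minors vanish when all three $\alpha$-indices (respectively $\beta$-indices) coincide, forcing the joint constraint that yields the exponent $(4r_1+2a_1)t_1+(4s_1+2c_1)t_2$. Your formulation via Pl\"ucker coordinates and explicit column-proportionality is somewhat more systematic than the paper's ``Type 1/2/3'' bookkeeping, but the substance is identical.
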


\begin{proof} Expand
$$
\sigma_1(t_1)=\sum_{i=0}^{n_1-1}\phi_i(t_1)e_i\text{ and }\sigma_2(t_2)=\sum_{j=0}^{n_2-1}\psi_j(t_2)f_j\,.
$$
We assume $\phi_0(t_1)=e^{r_1t_1}$ and $\psi_0(t_2)=e^{s_1t_2}$. We also assume that
$\phi_i(t_1)$ and $\psi_j(t_2)$ for $i\ge1$ and $j\ge1$ are of the form:
$$\phi_i(t_1)=e^{a_it_1}\cdot\left
\{\begin{array}{r}1\\\cos(b_it_1)\\\sin(b_it_1)\end{array}\right\}\text{ and }
\psi_i(t_2)=e^{c_jt_2}\cdot\left
\{\begin{array}{r}1\\\cos(d_jt_2)\\\sin(d_jt_2)\end{array}\right\}
$$
where $r_1=a_0>a_1\ge a_2\dots$ and
$s_1=c_0>c_1\ge c_2\ge\dots$ so the remaining exponentials grow less rapidly.
Exponential growth of the form
$$e^{(a_{i_1}+a_{i_2}+a_{i_3})t_1+(c_{j_1}+c_{j_2}+c_{j_3})t_2}$$
in $\Sigma_{/1}\wedge\Sigma_{/2}\wedge\Sigma_{/ij}$ arises from terms of the form
$$(e_{m_1}\otimes f_{k_1})\wedge(e_{m_2}\otimes f_{k_2})\wedge(e_{m_3}\otimes f_{k_3})$$
where
$\{i_1,i_2,i_3\}$ is a permutation of $\{m_1,m_2,m_3\}$ and
 $\{j_1,j_2,j_3\}$ is a permutation of $\{k_1,k_2,k_3\}$. Possible terms of maximal growth can
be enumerated as follows:
\smallbreak\noindent{\bf Type 1.} Terms involving $e^{3r_1t_1}$. This corresponds to $i_1=i_2=i_3=0$
and hence $\{k_1,k_2,k_3\}$ are distinct. Thus these grow at most like $e^{(s_1+2c_1)t_2}$ in $t_2$.
\smallbreak\noindent{\bf Type 2.} Terms involving $e^{3s_1t_2}$. This corresponds to
$j_1=j_2=j_3=0$ and hence $\{m_1,m_2,m_3\}$ are distinct. Thus these grow at most like
$e^{(r_1+2a_1)t_1}$ in $t_1$.
\smallbreak\noindent{\bf Type 3.} Terms involving at least 2 different exponentials in $t_1$ and at least
2 different exponentials in $t_2$. Here at least one of the $\{i_1,i_2,i_3\}$ involves an index which is not $0$
and at least one of the $\{j_1,j_2,j_3\}$ involves an index which is not $0$. Thus those grow at most like
$e^{(2r_1+a_1)t_1+(2s_1+c_1)t_2}$.
\medbreak When considering $(\tilde L_{ij},\tilde L_{kl})$, terms must be paired against like terms. Let
$$\xi_{M,K}:=(e_{m_1}\otimes f_{k_1})\wedge(e_{m_2}\otimes f_{k_2})\wedge(e_{m_3}\otimes f_{k_3})\,.$$
Then $(\xi_{M,K},\xi_{\tilde M,\tilde K})=0$ if
$\{m_1,m_2,m_3\}$ is not a permutation of $\{\tilde m_1,\tilde m_2,\tilde m_3\}$ or if
$\{k_1,k_2,k_3\}$ is not a permutation of $\{\tilde k_1,\tilde k_2,\tilde k_3\}$.
Thus
terms of Type 1 must be paired against terms of Type 1, of Type 2 against Type 2, and of Type 3 against Type 3.
We consider $\tilde L_{11}=\Sigma_{/1}\wedge\Sigma_{/2}\wedge\Sigma_{/11}$.
We have
$$\Sigma_{/1}\wedge\Sigma_{/11}=
((\partial_{t_1}\sigma_1\otimes\sigma_2)\wedge(\partial_{t_1}\partial_{t_1}\sigma_1\otimes\sigma_2))\,.$$
In this expression, $\sigma_2$ can be treated as a constant vector and essentially ignored for the moment.
Since at least 2 different terms must occur in any non-zero wedge product, there are no $e^{2r_1t_1}$ exponentials
appearing. Thus there are no terms of Type 1 in $\tilde L_{11}$. Similarly there are no terms of Type 2 in
$\tilde L_{22}$. Thus $(\tilde L_{11},\tilde L_{22})$ contains only terms of Type 3 so Assertion~1 follows.
Next, we shall consider $\tilde L_{12}=\Sigma_{/1}\wedge\Sigma_{/2}\wedge\Sigma_{/12}$. We have
$$\Sigma_{/1}\wedge\Sigma_{/12}=(\partial_{t_1}\sigma_1\otimes\sigma_2)
\wedge(\partial_{t_1}\sigma_1\otimes\partial_{t_2}\sigma_2)\,.$$
In this expression, $\partial_{t_1}\sigma_1$ can be treated as a constant vector and essentially ignored for
the moment. Since at least 2 different terms must occur in any non-zero term, there are no $e^{2s_1t_2}$
exponentials appearing. Thus there are no terms of Type 2 to be considered and, similarly no terms of Type 1 to
be considered and Assertion~2 follows.
\end{proof}

We apply Lemma~\ref{L2.1} and Lemma~\ref{L4.1} to estimate therefore that:
\begin{equation}\label{E4.a}
\begin{array}{l}
|\scalar|\le Cg^{-4}e^{(4r_1+2a_1)t_1+(4s_1+2c_1)t_2},\\
g|\scalar|\le Cg^{-3}e^{(4r_1+2a_1)t_1+(4s_1+2c_1)t_2}\,.
\end{array}\end{equation}
We use Lemma~\ref{L3.1} to estimate
$g^2\ge\epsilon^2e^{(3r_1+a_1)t_1+(3s_1+c_1)t_2}$. Raising this to the third and fourth power yields
\begin{equation}\label{E4.b}
\begin{array}{l}
g^4\ge\epsilon^4e^{(6r_1+2a_1)t_1+(6s_1+2c_1)t_2},\\
g^3\ge\epsilon^3e^{(\frac92r_1+\frac32a_1)t_1+(\frac92s_1+\frac32c_1)t_2}\,.
\end{array}\end{equation}
Let $\epsilon_1$ be as in Equation~(\ref{E3.a}). Choose
$\epsilon_2=\epsilon_2(\Sigma)>0$ to measure the spectral gap, i.e. so:
\begin{eqnarray*}
r_1-\epsilon_2\ge\Re(\lambda)\ge r_k+\epsilon_2\text{ for all }\lambda\in\mathcal{R}_1-\{r_1,r_k\},\\
s_1-\epsilon_2\ge\Re(\mu)\ge s_\ell+\epsilon_2\text{ for all }\mu\in\mathcal{R}_2-\{s_1,s_\ell\}\,.
\end{eqnarray*}
Combining Equation~(\ref{E4.a}) with Equation~(\ref{E4.b}) then yields the estimates:
\begin{eqnarray*}
|\scalar|&\le&Ce^{(-2r_1t_1-2s_1t_2)}\le Ce^{-2\epsilon_1(t_1+t_2)}\le Ce^{-2\epsilon_1\|(t_1,t_2)\|},\\
g|\scalar|&\le&Ce^{((4-\frac92)r_1+(2-\frac32)a_1)t_1+((4-\frac92)s_1+(2-\frac32)c_1)t_2}\\
&=&Ce^{-\frac12(r_1-a_1)t_1-\frac12(s_1-c_1)t_2}\le Ce^{-\frac12(\epsilon_2t_1+\epsilon_2t_2)}
\le Ce^{-\frac12\epsilon_2\|(t_1,t_2)\|}\,.
\end{eqnarray*}
This establishes Assertion~1 and Assertion~2 on the first quadrant $t_1\ge0$ and $t_2\ge0$; we use
similar arguments to establish these estimates in the remaining quadrants.
Integrating the estimate for $g|\scalar|$ in polar coordinates
then shows $|\scalar|[\Sigma]\le C\epsilon_2^{-1}$ which completes the proof of Theorem~\ref{T1.8}.\hfill\qed

\begin{remark}\rm
 It is not necessary to assume that roots $\mu$ of $\mathcal{P}_1$
 with $r_1>\Re(\mu)>r_k$ are simple;
multiple roots can appear in this range as the exponential estimates swamp any
powers of $t_1$. Similarly, it is not
necessary to assume that the remaining roots $\mu$ of $\mathcal{P}_2$ with
 $s_1>\Re(\mu)>s_\ell$ are simple; the arguments
go through unchanged. More care must be taken, however, if the dominant roots $r_1$ or $r_k$ of
$\mathcal{P}_1$ or the
dominant roots $s_1$ or $s_\ell$ of $\mathcal{P}_2$ are not simple and
a further investigation by the second author into this case is planned.
\end{remark}

\section{The proof of Theorem~\ref{T1.11}}\label{S5}

Adopt the notation of Definition~\ref{D1.10}.

\begin{lemma}\label{L5.1} Let $\gamma_{\pm r}(t):=\Sigma(t,\pm r)=\sigma_1(t)\otimes\sigma_2(\pm r)$.
If all the roots of $\mathcal{P}_1$ and of $\mathcal{P}_2$ are simple and if the real roots are dominant, then:
\begin{eqnarray*}
\lim_{r\rightarrow\infty}\int_{-r}^r\kappa_g(\gamma_{\pm r})(t)ds=-\Theta[\sigma_1]\,.
\end{eqnarray*}
\end{lemma}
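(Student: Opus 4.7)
The plan is to apply Lemma~\ref{L2.1} to express $\kappa_g(\gamma_{\pm r})$ as a ratio of wedge-product inner products, exploit the tensor factorization $\Sigma = \sigma_1 \otimes \sigma_2$ to reduce the $r$-dependent part to a single scalar prefactor multiplying the integrand of $\Theta[\sigma_1]$, take a pointwise limit using dominance of the extreme real roots of $\mathcal{P}_2$, and conclude by dominated convergence.

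At $(t,\pm r)$ one has $\Sigma_{/1} = \dot\sigma_1\otimes\sigma_2$, $\Sigma_{/2} = \sigma_1\otimes\dot\sigma_2$, and $\Sigma_{/11} = \ddot\sigma_1\otimes\sigma_2$. Using $(a\otimes b,\,c\otimes d) = (a,c)(b,d)$ together with two applications of the Gram identity $(u\wedge v,\,w\wedge x) = (u,w)(v,x) - (u,x)(v,w)$, I would show
\begin{equation*}
(\Sigma_{/1}\wedge\Sigma_{/2},\,\Sigma_{/1}\wedge\Sigma_{/11}) = \|\sigma_2\|^2(\sigma_2,\dot\sigma_2)\,(\dot\sigma_1\wedge\sigma_1,\,\dot\sigma_1\wedge\ddot\sigma_1)
\end{equation*}
and, crucially, the sum-of-squares identity
\begin{equation*}
g^2 = \|\sigma_2\|^2\|\dot\sigma_2\|^2\|\dot\sigma_1\wedge\sigma_1\|^2 + (\dot\sigma_1,\sigma_1)^2\|\sigma_2\wedge\dot\sigma_2\|^2,
\end{equation*}
with $\sigma_2,\dot\sigma_2$ evaluated at $\pm r$. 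Combining these with $\|\Sigma_{/1}\|^2 = \|\dot\sigma_1\|^2\|\sigma_2\|^2$, Lemma~\ref{L2.1}(2) yields
\begin{equation*}
\kappa_g(\gamma_{\pm r})(t)\,ds = \frac{(\sigma_2(\pm r),\dot\sigma_2(\pm r))}{g(t,\pm r)}\cdot\frac{(\dot\sigma_1\wedge\sigma_1,\,\dot\sigma_1\wedge\ddot\sigma_1)}{\|\dot\sigma_1\|^2}\,dt.
\end{equation*}

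For the pointwise limit, dominance of $s_1 > 0$ makes $\sigma_2(r)$ and $\dot\sigma_2(r)$ asymptotically parallel to the unit basis vector for $e^{s_1 t_2}$, so $\|\sigma_2\wedge\dot\sigma_2\|$ is subdominant in the $g^2$ identity and the second summand disappears in the limit. Then $g(t,r)\sim s_1 e^{2s_1 r}\|\dot\sigma_1(t)\wedge\sigma_1(t)\|$ while $(\sigma_2(r),\dot\sigma_2(r))\sim s_1 e^{2s_1 r}$, giving prefactor limit $+1/\|\dot\sigma_1\wedge\sigma_1\|$. The dominant negative root $s_\ell < 0$ governs the case $t_2 = -r$, and the analogous analysis produces the opposite sign because $(\sigma_2(-r),\dot\sigma_2(-r))\sim s_\ell e^{-2s_\ell r} < 0$ while $g$ stays positive. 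The sign discrepancy between the two edges matches the fact that the Lemma~\ref{L2.1} normal $+\Sigma_{/2}$ is outward at $t_2 = +r$ but inward at $t_2 = -r$; once the consistent inward-pointing boundary normal needed for Gauss--Bonnet is adopted, both integrands converge pointwise to $-\Theta_{\sigma_1}(t)\,ds$.

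To interchange limit and integral, the sum-of-squares identity immediately supplies the uniform bound $g^2 \ge \|\sigma_2\|^2\|\dot\sigma_2\|^2\|\dot\sigma_1\wedge\sigma_1\|^2$, so Cauchy--Schwarz in the $\sigma_2$ variables produces $|(\sigma_2,\dot\sigma_2)|/g \le 1/\|\dot\sigma_1\wedge\sigma_1\|$ for every $r$. A second Cauchy--Schwarz in the $\sigma_1$ variables then bounds the full integrand in absolute value by $\|\dot\sigma_1\wedge\ddot\sigma_1\|/\|\dot\sigma_1\|^2 = \kappa_{\sigma_1}(t)\,\|\dot\sigma_1(t)\|$, which is $r$-independent and integrable with $\int\kappa_{\sigma_1}\,ds = \kappa[\sigma_1] < \infty$ by Theorem~\ref{T1.2}. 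Dominated convergence then delivers the claimed limit. The main obstacle is locating the sum-of-squares factorization of $g^2$: it is precisely what supplies both the closed-form pointwise limit $1/\|\dot\sigma_1\wedge\sigma_1\|$ and the uniform majorant in a single stroke. Once this identity is in hand, the asymptotic analysis and the sign bookkeeping tied to the orientation of the boundary normal are both routine.
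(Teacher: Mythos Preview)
Your argument is correct and follows the same overall strategy as the paper: apply Lemma~\ref{L2.1}(2), exploit the tensor factorization $\Sigma=\sigma_1\otimes\sigma_2$ to separate variables, and let the dominant real root of $\mathcal{P}_2$ drive the asymptotics as $r\to\infty$. The sign bookkeeping with the inward normal matches the paper's treatment exactly.

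Where you differ is in the level of rigor. The paper works purely asymptotically, writing $\sigma_2(r)\sim e^{s_1r}f_1$ and computing everything modulo exponentially suppressed remainders; it never explicitly justifies passing the limit through the integral. You instead derive the exact identity
\[
g^2=\|\sigma_2\|^2\|\dot\sigma_2\|^2\,\|\dot\sigma_1\wedge\sigma_1\|^2+(\dot\sigma_1,\sigma_1)^2\,\|\sigma_2\wedge\dot\sigma_2\|^2,
\]
which simultaneously pinpoints the pointwise limit and, via the first summand, furnishes the uniform bound $|(\sigma_2,\dot\sigma_2)|/g\le 1/\|\dot\sigma_1\wedge\sigma_1\|$. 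This lets you majorize the integrand by $\kappa_{\sigma_1}\|\dot\sigma_1\|$, integrable by Theorem~\ref{T1.2}, and invoke dominated convergence cleanly. Your route is thus a genuine strengthening of the paper's sketch: same idea, but with the interchange of limit and integral made airtight rather than left implicit.
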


\begin{proof} We will use the inward unit normal to apply
the Gauss-Bonnet theorem. This points in the direction of $\mp\Sigma_{/1}\wedge\Sigma_{/2}(t,\pm r)$.
Lemma~\ref{L2.1} to shows:
$$
\kappa_g(t,\pm r)ds=\mp(\Sigma_{/1}\wedge\Sigma_{/2},\Sigma_{/1}\wedge\Sigma_{/11})
\cdot g^{-1}\|\Sigma_{/1}\|^{-2}(t,\pm r)dt\,.
$$
First let $t_2=r$.
We express
$\sigma_2(t_2)=e^{s_1t_2}(f_1+\mathcal{E}(t_2))$ where the
remainder $\mathcal{E}(t_2)$ is exponentially suppressed, i.e. satisfies
an estimate of the form $\|\mathcal{E}(t_2)\|\le e^{-\epsilon t_2}$ for some $\epsilon>0$ if $t_2>>0$.
In this setting, to simplify the notation, we
shall simply write $\sigma_2(t_2)\sim e^{s_1t_2}f_1$. We compute:
\begin{eqnarray*}
&&\begin{array}{ll}\Sigma_{/1}\sim\dot\sigma_1\otimes e^{s_1r}f_1,&
\Sigma_{/2}\sim\sigma_1\otimes s_1e^{s_1r}f_1,\\
g=\|\Sigma_{/1}\wedge\Sigma_{/2}\|\sim|s_1|e^{2s_1r}\|\dot\sigma_1\wedge\sigma_1\|,&
\Sigma_{/11}\sim\ddot\sigma_1\otimes e^{s_1r}f_1,\end{array}\\
&&\kappa_g(\gamma_r)ds=-(\Sigma_{/1}\wedge\Sigma_{/2},\Sigma_{/1}\wedge\Sigma_{/11})g^{-1}\|\Sigma_{/1}\|^{-2}dt\\
&&\qquad\qquad\sim-\frac{s_1e^{4s_1r}}{|s_1|e^{4s_1r}}
\frac{(\dot\sigma_1(t_1)\wedge\sigma_1(t_1),\dot\sigma_1(t_1)
\wedge\ddot\sigma_1(t_1))}{\|\sigma_1(t_1)\wedge\dot\sigma_1(t_1)\|\cdot
\|\dot\sigma_1(t_1)\|^2}dt
\end{eqnarray*}
This gives $-\Theta(\sigma_1)dt$ in the limit since $s_1>0$. We do not need to change the sign of the normal
but again get a negative sign if $s_k<0$ since $+\frac{s_k}{|s_k|}=-1$.
  \end{proof}

  We apply the Gauss-Bonnet theorem to the square $\Sigma([-r,r]\times[-r,r])$. Let $\alpha_i$ be
  the interior angles. We then have:
   \begin{eqnarray*}
 2\pi&=&\int_{-r}^r\int_{-r}^r\scalar(t_1,t_2)gdt_1dt_2+\sum_{i=1}^4(\pi-\alpha_i)
+\int_{-r}^r\kappa_g(\Sigma(t,r))ds\\&&+ \int_{-r}^r\kappa_g(\Sigma(t,-r))ds
+ \int_{-r}^r\kappa_g(\Sigma(r,t))ds+ \int_{-r}^r\kappa_g(\Sigma(-r,t))ds\,.
 \end{eqnarray*}
 We examine the angle $\alpha_1$ at $\Sigma(r,r)$. Because
 $\Sigma_{/1}(r,r)\sim r_1\Sigma(r,r)$ and because
 $\Sigma_{/2}(r,r)\sim s_1\Sigma(r,r)$, $\Sigma_{/1}$ and $\Sigma_{/2}$ point
 in approximately the same direction. Consequently, $\cos(\alpha_1)\sim 1$
and $\alpha_1\sim 0$. Keeping careful track of the signs shows the other angles also are close to $0$.
  Theorem~\ref{T1.11} then follows from Lemma~\ref{L5.1}.\hfill\qed

\section{The proof of Theorem~\ref{T1.12}}\label{S6}
 We apply Theorem~\ref{T1.11} to the setting
$n_1=n_2$. Let $\{\xi_1,\xi_2\}$ be the standard orthonormal basis for $\mathbb{R}^2$.
Suppose $\sigma(t)=e^{at}e_1+e^{bt}e_2$ for $a>0>b$. We use Equation~(\ref{E1.d}) to see
that:
$$\Theta[\sigma]=\int_{-\infty}^\infty\frac{|(a-b)ab|
e^{(a+b)t}}{a^2e^{2at}+b^2e^{2bt}}dt
=\int_{-\infty}^\infty\frac{|(a-b)ab|e^{(a-b)t}}{a^2e^{2(a-b)t}+b^2}dt\,.
$$
We have $a-b>0$. We change variables setting $x:=e^{(a-b)t}$ to express
$$
\Theta[\sigma]=\int_0^\infty \frac{|ab|}{a^2x^2+b^2}dx
=\int_0^\infty\frac{|a|}{|b|}\frac1{\frac{a^2}{b^2}x^2+1}dx\,.
$$
We again change variables setting $y=\frac{|a|}{|b|}x$ to express
$$
\Theta[\sigma]=\int_0^\infty\frac1{y^2+1}dy=\frac\pi2\,.
$$
\medbreak\noindent Theorem~\ref{T1.12} now follows from Theorem~\ref{T1.11}.\hfill\qed

\section{The proof of Theorem~\ref{T1.15}}\label{S7}
Let $\Sigma(t_1,t_2)=\sigma_1(t_1)\otimes\sigma_2(t_2)$
where
$$
\begin{array}{l}
\sigma_1(t_1)=(e^{r_1t_1},\dots,e^{r_kt_1})\text{ for }r_1>r_2>0>r_{k-1}>r_k,\\
\sigma_2(t_2)=(e^{s_1t_2},\dots,e^{s_\ell t_2})\text{ for }s_1>s_2>0>s_{\ell-1}>s_\ell\,.
\end{array}$$
We focus on the first quadrant and assume $t_1\ge0$ and $t_2\ge0$; the other quadrants are handled
similarly. By Lemma~\ref{L3.1}, $g$ is growing exponentially at $\infty$ and the
growth rate is controlled by the function $\mathcal{G}$ of Equation~(\ref{E3.b}); this need not be
the case if $s_2<<0$ and $t_2<<0$. Let $\{e_i\}$ (resp. $\{f_a\}$ and $\{e_i\otimes f_a\}$) be an orthonormal basis
for $\mathbb{R}^{n_1}$ (resp. $\mathbb{R}^{n_2}$ and $\mathbb{R}^{n_1n_2}$) so that summing over
$i$, $a$, and $(i,a)$, yields:
$$
\sigma_{P_1}(t_1)=e^{r_it_1}e_i,\quad\sigma_{P_2}(t_2)=e^{s_at_2}f_a,\quad
\Sigma(t_1,t_2)=e^{r_it_1+s_at_2}e_i\otimes f_a\,.
$$
We express $\|H\|$ in terms of wedge products and establish its asymptotic growth rate at
infinity as follows:

\begin{lemma}\label{L7.1}
Let $\Sigma$ satisfy the hypotheses of Theorem~\ref{T1.15}. Adopt the notation established above.
\begin{enumerate}
\item Let $\mathfrak{H}:=\Sigma_{/1}\wedge\Sigma_{/2}\wedge(g_{11}\Sigma_{/22}+g_{22}\Sigma_{/11}
-2g_{12}\Sigma_{/12})\in\Lambda^{3}(\mathbb{R}^{n_1n_2})$. Then:
$$\|H\|=g^{-3}\|\mathfrak{H}\|\,.$$
\item Let $\mathcal{H}:=e^{5r_1t_1+(3s_1+s_2+s_3)t_2}+e^{(3r_1+r_2+r_3)t_1+5s_1t_2}
+e^{(4r_1+r_2)t_1+(4s_1+s_2)t_2}$. There exist constants $C_i=C_i(\Sigma)>0$ so that
if $t_1\ge0$ and if $t_2\ge0$,
then
$$C_1g^{-3}\mathcal{H}\le\|H\|\le C_2g^{-3}\mathcal{H}\,.$$
\end{enumerate}\end{lemma}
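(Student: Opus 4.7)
Assertion~(1) is essentially algebraic. Starting from $H = g^{ij}L_{ij}$ and expanding the inverse of the $2\times 2$ metric gives
\[H = g^{-2}\bigl(g_{22}L_{11} + g_{11}L_{22} - 2g_{12}L_{12}\bigr).\]
By Lemma~\ref{L2.1}~(3), $\Sigma_{/1}\wedge\Sigma_{/2}\wedge\Sigma_{/ij} = g\, e_1\wedge e_2\wedge L_{ij}$, so wedging $g^2H$ with $\Sigma_{/1}\wedge\Sigma_{/2}$ yields $\mathfrak{H} = g^3\, e_1\wedge e_2\wedge H$; the tangential parts of the $\Sigma_{/ij}$ drop out. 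Since $H\perp e_1,e_2$ and $\{e_1,e_2\}$ is orthonormal, $\|\mathfrak{H}\| = g^3\|H\|$, as required.

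For the upper bound in assertion~(2), I write $\sigma_1(t_1) = \sum_i e^{r_it_1}e_i$ and $\sigma_2(t_2) = \sum_a e^{s_at_2}f_a$, so that $|g_{ij}(t_1,t_2)| \le C e^{2r_1t_1+2s_1t_2}$ on the first quadrant. For the three wedges $\tilde L_{ij} := \Sigma_{/1}\wedge\Sigma_{/2}\wedge\Sigma_{/ij}$, I invoke the Type~1/2/3 decomposition from the proof of Lemma~\ref{L4.1}: recognizing proportional columns in the relevant $3\times 3$ wedge determinant shows that $\tilde L_{11}$ carries no Type~1 term, $\tilde L_{22}$ no Type~2, and $\tilde L_{12}$ neither. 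Bounding each surviving type by its maximal exponential rate and summing the six products $g_{ij}\tilde L_{kl}$ produces exactly the three exponentials listed in $\mathcal{H}$ as an upper envelope.

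The lower bound is the principal obstacle. The plan is to exhibit three mutually orthogonal basis 3-wedges in $\Lambda^3(\mathbb{R}^{n_1n_2})$ whose $\mathfrak{H}$-coefficients carry the three leading rates of $\mathcal{H}$, and then use $\|\mathfrak{H}\|^2 \ge \sum_i c_i^2$ where the $c_i$ are the selected coefficients. On $(e_1\otimes f_1)\wedge(e_1\otimes f_2)\wedge(e_1\otimes f_3)$ only $g_{11}\tilde L_{22}$ contributes, and its coefficient factors as the leading coefficient of $g_{11}$ times a Vandermonde determinant in $s_1,s_2,s_3$, nonzero by simplicity of the roots; a symmetric choice handles the Type~2 rate. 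The genuinely hard case is Type~3, where all three summands of $\mathfrak{H}$ contribute and must be combined on a mixed basis wedge such as $(e_1\otimes f_1)\wedge(e_1\otimes f_2)\wedge(e_2\otimes f_1)$. Direct expansion of the three relevant $3\times 3$ determinants yields a polynomial in $r_1,r_2,s_1,s_2$ that must be checked to be nonzero; should it vanish for a special root configuration, one can switch to a neighboring Type~3 basis wedge to restore non-degeneracy. Once these three asymptotic rates are matched from below, a continuity argument on bounded subsets of $[0,\infty)^2$ upgrades the asymptotic estimates to the claimed pointwise uniform bound.
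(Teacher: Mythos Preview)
Your overall strategy matches the paper's proof almost exactly: assertion~(1) by the inverse-metric identity together with Lemma~\ref{L2.1}(3), the upper bound in~(2) via the Type~1/2/3 coefficient decomposition of $\mathfrak{H}$, and the lower bound by isolating the coefficients of $\xi_{11,12,13}$, $\xi_{11,21,31}$, and $\xi_{11,12,21}$.

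There is, however, one genuine gap in your Type~3 lower bound. You write that ``should [the polynomial] vanish for a special root configuration, one can switch to a neighboring Type~3 basis wedge to restore non-degeneracy.'' This fallback does not work: among Type~3 wedges the exponential rate $(4r_1+r_2)t_1+(4s_1+s_2)t_2$ is attained \emph{only} by $\xi_{11,12,21}$; any other Type~3 wedge has a strictly smaller rate and therefore cannot witness the third summand of $\mathcal{H}$ from below. So you must actually compute the leading coefficient on $\xi_{11,12,21}$ and verify it is nonzero. The paper does precisely this: expanding $g_{11},g_{12},g_{22}$ modulo lower-order terms and combining $s_1^2c_{11,11,12,21}+r_1^2c_{22,11,12,21}-2r_1s_1c_{12,11,12,21}$ gives
\[
r_1(r_1-r_2)\,s_1(s_1-s_2)\,(2r_1s_1-r_2s_1-r_1s_2)
= r_1(r_1-r_2)\,s_1(s_1-s_2)\,\bigl[(r_1-r_2)s_1+r_1(s_1-s_2)\bigr],
\]
which is strictly positive under the hypotheses $r_1>r_2>0$ and $s_1>s_2>0$ of Theorem~\ref{T1.15}. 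With this computation in hand your continuity argument is unnecessary: the three coefficient lower bounds (two exact, one asymptotic with positive leading term) already give the pointwise estimate on the whole first quadrant.
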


\begin{proof}
The mean curvature is given by
$H=g^{ij}L_{ij}\in T_P\Sigma^\perp$.
Let $\{\xi_1,\xi_2\}$ be an orthonormal frame for $T\Sigma$
so $\Sigma_{/1}\wedge\Sigma_{/2}=g\xi_1\wedge\xi_2$. By Lemma~\ref{L2.1},
$$\Sigma_{/1}\wedge\Sigma_{/2}\wedge\Sigma_{/ij}=g\xi_1\wedge\xi_2\wedge L_{ij}\,.$$
Since $g^{11}=g^{-2}g_{22}$, $g^{22}=g^{-2}g_{11}$, $g_{12}=-g^{-2}g_{12}$, and
since $\{\xi_1,\xi_2,L_{ij}\}$ form an orthogonal set, we prove
Assertion~1 by computing:
\begin{eqnarray*}
\|H\|&=&g^{-2}\|g_{22}L_{11}+g_{11}L_{22}-2g_{12}L_{12}\|\\
&=&g^{-3}\|\Sigma_{/1}\wedge\Sigma_{/2}\wedge\{g_{22}\Sigma_{/11}+g_{11}\Sigma_{/22}
-2g_{12}\Sigma_{/12}\}\|\\
&=&g^{-3}\|\mathfrak{H}\|\,.
\end{eqnarray*}

If $\{ua,vb,wc\}$ are distinct pairs of indices, set
$$\xi_{ua,vb,wc}:=(e_u\otimes f_a)\wedge(e_v\otimes f_b)\wedge(e_w\otimes f_c)\,.$$
If $\omega\in\Lambda^3(\mathbb{R}^{n_1n_2})$, let
$c(\xi_{ua,vb,wc},\omega)$ denote the coefficient of $\xi_{ua,vb,wc}$ in
$\omega$. Since $\omega=\sum_\xi c(\xi,\omega)\omega$, there exist constants $C_i=C_i(n_1,n_2)$ so that
$$C_1\sum_\xi |c(\xi,\omega)|\le\|\omega\|\le C_2\sum_\xi |c(\xi,\omega)|\,.$$
We wish to show that $\mathcal{H}$ controls the growth rate of $\|H\|$ at infinity. Thus we must
estimate each coefficient $c(\xi,\mathfrak{H})$ from above by $\mathcal{H}$ and exhibit 3 different
$\xi$ which we will use to estimate $\|\mathfrak{H}\|$ from below in terms of the 3 terms comprising $\mathcal{H}$.
We shall use the same argument given to establish Lemma~\ref{L4.1}. We may express
$$
c(\xi_{ua,vb,wc},\Sigma_{/1}\wedge\Sigma_2\wedge\Sigma_{/\mu\nu})(t_1,t_2)=
e^{(r_u+r_v+r_w)t_1+(s_a+s_b+s_c)t_2}c_{\mu\nu,ua,vb,wc}
$$
where
\medbreak\qquad
$c_{11,ua,vb,wc}=\det\left(\begin{array}{lll}
r_u&r_v&r_w\\
s_a&s_b&s_c\\
r_u^2&r_v^2&r_w^2\end{array}\right),\quad
c_{22,ua,vb,wc}=\det\left(\begin{array}{lll}
r_u&r_v&r_w\\
s_a&s_b&s_c\\
s_a^2&s_b^2&s_c^2\end{array}\right)$,
\medbreak\qquad
$c_{12,ua,vb,wc}=\det\left(\begin{array}{lll}
r_u&r_v&r_w\\
s_a&s_b&s_c\\
r_us_a&r_vs_b&r_ws_c\end{array}\right)$.
\smallbreak\noindent{\bf Terms of Type 1.} Suppose $u=v=w=1$. Then
$c(\xi_{1a,1b,1c},\tilde L_{11})=0$ and $c(\xi_{1a,1b,1c},\tilde L_{12})=0$.
Since $\{a,b,c\}$ are distinct, we may bound
\begin{equation}\label{E7.a}
|c(\xi_{1a,1b,1c},\mathfrak{H})(t_1,t_2)|\le|c_{22,1a,1b,1c}|e^{5r_1t_1+(3s_1+s_2+s_3)t_2}\le
C\mathcal{H}(t_1,t_2)\,.
\end{equation}
Let $\xi=\xi_{11,12,13}$. As $g_{22}\ge Ce^{2r_1t_1+2s_1t_2}$ and as
$c_{\mu\nu,1a,1b,1c}=0$ for $(\mu,\nu)\ne(2,2)$,
\begin{equation}\label{E7.b}
\begin{array}{l}
\|H(t_1,t_2)\|\ge|c_{22,11,12,13}|e^{2r_1t_1+2s_1t_2}e^{3r_1t_1+(s_1+s_2+s_3)t_2}\\
\quad=r_1(s_1 - s_2) (s_1 - s_3) (s_2 - s_3)e^{5r_1t_1+(3s_1+s_2+s_3)t_2}\,.\vphantom{\vrule height 14pt}
\end{array}\end{equation}

\smallbreak\noindent{\bf Terms of Type 2.} Suppose $a=b=c=1$. We argue similarly to conclude:
\begin{equation}\label{E7.c}\begin{array}{ll}
|c(\xi_{u1,v1,w1},\mathfrak{H})(t_1,t_2)|\le C\mathcal{H}(t_1,t_2),\\
\|H(t_1,t_2)\|\ge s_1(r_1 - r_2) (r_1 - r_3) (r_2 - r_3)e^{(3r_1+r_2+r_3)t_1+5s_1t_2}\,.\vphantom{\vrule height 12pt}
\end{array}\end{equation}

\smallbreak\noindent{\bf Terms of Type 3.} We suppose $(u,v,w)\ne(1,1,1)$ and $(a,b,c)\ne(1,1,1)$.
The following upper bound is then immeduate:
\begin{equation}\label{E7.d}
|c(\xi_{ua,vb,wc},\mathfrak{H})(t_1,t_2)|\le Ce^{(4r_1+r_2)t_1+(4s_1+s_2)t_2}\,.
\end{equation}
Let $\xi=\xi_{11,12,21}$. We expand, modulo lower order terms,
\begin{eqnarray*}
&&g_{11}=r_1^2e^{2r_1t_1+2s_1t_2}+\dots,\quad
g_{12}=r_1s_1e^{2r_1t_1+2s_1t_2}+\dots,\\
&&g_{22}=s_1^2e^{2r_1t_1+2s_1t_2}+\dots\,.
\end{eqnarray*}
We compute, again modulo lower order terms, that:
\begin{eqnarray*}
&&|c(\xi_{11,12,21},\mathfrak{H})(t_1,t_2)|
=\left| c(\xi_{11,12,21},g_{22}\tilde L_{11}+g_{11}\tilde L_{22}-2g_{12}\tilde L_{12})\right|\\
&=&e^{(4r_1+r_2)t_1+(4s_1+s_2)t_2}\left|
s_1^2c_{11,11,12,21}
+r_1^2c_{22,11,12,21}-2r_1s_1c_{12,11,12,21}\right|+\dots\\
&=&e^{(4r_1+r_2)t_1+(4s_1+s_2)t_2}
\{r_1 (r_1 - r_2) s_1 (s_1 - s_2) (2 r_1 s_1 - r_2 s_1 - r_1 s_2)\}+\dots\,.
\end{eqnarray*}
Since $\{r_1 (r_1 - r_2) s_1 (s_1 - s_2) (2 r_1 s_1 - r_2 s_1 - r_1 s_2)\}>0$, we have
\begin{equation}\label{E7.e}
|c(\xi_{11,12,21},\mathfrak{H})(t_1,t_2)|\ge Ce^{(4r_1+r_2)t_1+(4s_1+s_2)t_2}\,.
\end{equation}
Assertion~2 now follows from Equation~(\ref{E7.a})--Equation~(\ref{E7.e}).
\end{proof}

We restrict to the first quadrant $t_1\ge0$ and $t_2\ge0$.
By Lemma~\ref{L3.1},
\begin{equation}\label{E7.f}
g\ge C e^{2r_1t_1+(s_1+s_2)t_2}\quad\text{and}\quad g\ge Ce^{(r_1+r_2)t_1+2s_1t_2}
\end{equation}
for some $C>0$.  We use Equation~(\ref{E7.f}) to see if $\delta\in[0,1]$, then:
\begin{equation}\label{E7.g}
g^k\ge C e^{k\delta\{2r_1t_1+(s_1+s_2)t_2\}+k(1-\delta)\{(r_1+r_2)t_1+2s_1t_2\}}\,.
\end{equation}
We apply Lemma~\ref{L7.1}.
\subsection{The proof that $\|H\|$ is exponentially decaying}
 Let $\delta\in[0,1]$. We use Equation~(\ref{E7.g}) and Lemma~\ref{L7.1}. We bound
terms of Type I by:
\begin{eqnarray*}
&&g^{-3}e^{5r_1t_1+(3s_1+2s_2)t_2}\le Ce^{a_1(\delta)t_1+a_2(\delta)t_2}\quad\text{for}\\
&&a_1(\delta)=5r_1-3\{\delta2r_1+(1-\delta)(r_1+r_2)\}\text{ and }\\
&&a_2(\delta)=3s_1+2s_2-3\{\delta(s_1+s_2)+(1-\delta)2s_1\}\,.
\end{eqnarray*}
We show such terms exhibit exponential decay by estimating:
\begin{eqnarray*}
&&a_1(\textstyle\frac23)=5r_1-4r_1-r_1-r_2=-r_2<0,\\
&&a_2(\textstyle\frac23)=3s_1+2s_2-2s_1-2s_2-2s_1=-s_1<0\,.
\end{eqnarray*}
The terms of Type 2 are estimated similarly. We estimate the terms of Type 3:
\begin{eqnarray*}
&&g^{-3}e^{(4r_1+r_2)t_1+(4s_1+s_2)t_2}\le Ce^{a_1(\delta)t_1+a_2(\delta)t_2}\quad\text{for}\\
&&a_1(\delta)=4r_1+r_2-3\{\delta(2r_1)-(1-\delta)(r_1+r_2)\}\text{ and }\\
&&a_2(\delta)=4s_1+s_2-3\{\delta(s_1+s_2)-3(1-\delta)(2s_1)\}\,.
\end{eqnarray*}
We take $\delta=\frac12$ and show such terms exponential decay
by computing:
\begin{eqnarray*}
a_1(\textstyle\frac12)=4r_1+r_2-3r_1-\frac32r_1-\frac32r_2=-\frac12r_1-\frac12r_2<0,\\
a_2(\textstyle\frac12)=4s_1+s_2-\frac32s_1-\frac32s_2-3s_1=-\frac12s_1-\frac12s_2<0\,.
\end{eqnarray*}
This completes the proof that $\|H\|$ decays exponentially.\hfill\qed

\subsection{The proof that $\|H\|\in L^3(gdt_1dt_2)$.} We examine
$$
g\|H\|^3=g^{-8}\|\Sigma_{/1}\wedge\Sigma_{/2}\wedge\{g_{22}\Sigma_{/11}
+g_{11}\Sigma_{/22}-2g_{12}\Sigma_{/12}\}\|^3\,.
$$
We estimate the terms of type 1. Set
\begin{eqnarray*}
&&a_1(\delta):=3(5r_1)-8\{\delta 2r_1+(1-\delta)(r_1+r_2)\},\\
&&a_2(\delta):=3(3s_1+2s_2)-8\{\delta(s_1+s_2)+(1-\delta)2s_1\}\,.
\end{eqnarray*}
We take $\delta=\frac78$ to compute:
\begin{eqnarray*}
&&\textstyle a_1(\frac78)=15r_1-14r_1-r_1-r_2=-r_2<0,\\
&&\textstyle a_2(\frac78)=9s_1+6s_2-7s_1-7s_2-2s_1=-s_2<0\,.
\end{eqnarray*}
The terms of Type 2 are estimated similarly. To estimate the terms of Type 3, we take $\delta=\frac12$ and compute:
\begin{eqnarray*}
a_1(\textstyle\frac12)&=&3\{4r_1+r_2\}-8\{\delta(2r_1)+(1-\delta)(r_1+r_2)\}\\
&=&\textstyle12r_1+3r_2-8\{\frac32r_1+\frac12r_2\}=-r_2<0,\\
a_2(\textstyle\frac12)&=&3\{4s_1+s_2\}-8\{\delta(2s_1)+(1-\delta)(s_1+s_2)\}\\
&=&\textstyle12s_1+3s_2-8\{\frac32s_1+\frac12s_2\}=-s_2<0\,.
\end{eqnarray*}
This estimates all the terms comprising $g\|H\|^3$; thus $g\|H\|^3dt_1dt_2$ is integrable.\hfill\qed

\section{Examples}\label{S8}
In this section, we present a number of examples to illustrate various points; many
of them were Mathematica assisted and used a program developed by M. Brozos-Vazquez \cite{M2}.

\subsection{Finite total first curvature}
Theorem~\ref{T1.2} shows the total first curvature of $\sigma$
 is finite if all the roots of $\mathcal{P}$ are simple
and if the real roots of $\mathcal{P}$ are dominant. This can fail if a dominant root is complex.
\begin{example}\label{Ex8.1}
\rm Let $\sigma(t):=(e^t\cos(t),e^t\sin(t),e^{-t})$. The dominant root here is complex. We
show that $\kappa ds$ is not in $L^1$ by computing:
\begin{eqnarray*}
&&\dot\sigma=(e^t(\cos(t)-\sin(t)),e^t(\cos(t)+\sin(t)),-e^{-t}),\\
&&\ddot\sigma=(-2e^t\sin(t),2e^t\cos(t),e^{-t}),\\
&&\|\dot\sigma\wedge\ddot\sigma\|=\{4e^{4t}+10\}^{1/2},\\
&&\kappa ds=\{4e^{4t}+10\}^{1/2}\{2e^{2t}+e^{-t}\}^{-1}dt\,.
\end{eqnarray*}
\end{example}

\subsection{Finite total Gauss curvature}
Theorem~\ref{T1.8} shows that if all the roots of $\mathcal{P}_1$ and $\mathcal{P}_2$ are simple
and if the real roots are dominant, then the total Gauss curvature is finite. This can fail
if one of the dominant roots is complex.

\begin{example}\label{Ex8.2}\rm
Let $\sigma_1(t_1)=(e^{t_1}\cos(t_1),e^{t_1}\sin(t_1),e^{-t_1})$ and $\sigma_2(t_2)=(e^{t_2},e^{-t_2})$. Set
$\mathcal{E}_1:
=\left(e^{4 t_1-4 t_2}+e^{4 (t_1+t_2)}+4 e^{-4 t_1}+6 e^{4 t_1}+5 e^{-4 t_2}+5 e^{4 t_2}+2\right)$.
We use a Mathematica notebook \cite{M2} to see that :
$$g\scalar=-\frac{16 \left(e^{4 t_1}+2\right) \left(e^{4 t_2}+1\right) e^{10 t_1+6 t_2}
\mathcal{E}_1^{0.5}}{\left(2 e^{4 (t_1+t_2)}+e^{8 (t_1+t_2)}+6 e^{8 t_1+4 t_2}+5 e^{4 t_1+8 t_2}
+5 e^{4 t_1}+e^{8 t_1}+4 e^{4 t_2}\right)^2}\,.
$$
This permits to estimate for $t_1\ge0$ and $t_2\ge0$ that:
$$
g\scalar\le-\frac{16e^{4t_1+4t_2+10t_1+16t_2+(4t_1+4t_2)/2}}
{\left((2+1+6+5+5+1+4)e^{8t_1+8t_2}\right)^2}=-\frac{16}{24}e^{0t_1+6t_2}\,.
$$
Thus $g\scalar dt_1dt_2$ is not integrable for $0\le t_1<\infty$ and $0\le t_2<\infty$.
\end{example}

\subsection{The total Gauss curvature if $n_1=n_2=2$}
In Theorem~\ref{T1.13}, we showed that if the
roots of $\mathcal{P}_1$ and $\mathcal{P}_2$ are real and simple and if $n_1=n_2=2$,
then $\scalar[\Sigma]=0$. This result is non-trivial; there are examples where $|\scalar|[\Sigma]\ne0$
in this setting.

\begin{example}\label{Ex8.3} \rm
Let $\sigma_1(t_1)=(e^{t_1},e^{-2t_1})$
and $\sigma_2(t_2)=(e^{t_2},e^{-2t_2})$ for $a>0$ and $b>0$.
We use Mathematica \cite{M2} to compute:
\begin{eqnarray*}
&&g\scalar=
\frac{9. e^{-6 (t_1+t_2)} \left(e^{6 (t_1+t_2)}-4\right)}{\left(e^{-8 (t_1+t_2)} \left(e^{6 (t_1+t_2)}+e^{6 (2 t_1+t_2)}+e^{6 (t_1+2 t_2)}+4 e^{6 t_1}+4 e^{6 t_2}\right)\right)^{1.5}},\\
&&\int_{\mathbb{R}^2}g\scalar dt_1dt_2=0,\qquad\text{and}\qquad
\int_{\mathbb{R}^2}|g\scalar|dt_1 dt_2\approx.811319.
\end{eqnarray*}
The Gauss creature changes sign; it is positive for $6t_1+6t_2>\ln(4)$ and negative for $6t_1+6t_2<\ln(4)$.
It does not vanish identically and Theorem~\ref{T1.12} is non-trivial.\end{example}

\subsection{Uniform estimates on the first curvature}
Let $\sigma$ be defined by an ODE where the dominant roots of $\mathcal{P}$ are real.
If all the roots are real, then Theorem~\ref{T1.4} gives a uniform estimate for the total first curvature
which depends only on the dimension. If sub-dominant
complex roots are permitted, then no such uniform upper bound exists.
\begin{example}\label{Ex8.4}\rm
Let $\sigma_k(t)=(e^t,\cos(kt),\sin(kt),e^{-t})$ for $k\ge1$. We have
\begin{eqnarray*}
&&\dot\sigma_k(t)=(e^t,-k\sin(kt),k\cos(kt),-e^{-t}),\\
&&\ddot\sigma_k(t)=(e^t,-k^2\cos(kt),-k^2\sin(kt),e^{-t}),\\
&&\|\dot\sigma_k(t)\|^2=e^{2t}+k^2+e^{-2t},\quad
\|\dot\sigma_k(t)\wedge\ddot\sigma_k(t)\|\ge k^3,\\
&&\lim_{k\rightarrow\infty}\kappa[\sigma_k]\ge
\lim_{k\rightarrow\infty}\int_{-\infty}^\infty\frac{k^3}{e^{2t}+k^2+e^{-2t}}dt
\ge\lim_{k\rightarrow\infty}\int_0^1\frac{k^3}{e^2+k^2+1}dt\\
&&\qquad=\lim_{k\rightarrow\infty}\frac{k^3}{e^2+k^2+1}=\infty\,.
\end{eqnarray*}\end{example}

\subsection{Applying the Gauss-Bonnet Theorem with all real roots}
Because $\scalar[\Sigma]$ can be non-zero if $n_1>2$ and
$n_2>2$, Theorem~\ref{T1.13} is non-trivial.

\begin{example}\label{Ex8.5}\rm We examine the identity
$\mathcal{E}:=\scalar[\Sigma]-2\Theta[\sigma_1]-2\Theta[\sigma_2]+2\pi=0$.
We take $\sigma_1(t_1)=(e^{t_1},e^{a_1t_1},e^{a_2t_1})$ and $\sigma_2(t_2)=(e^{t_2},e^{b_1t_2},e^{b_2t_2})$
where $0\ge a_1>a_2$ and $0\ge b_1>b_2$. We computed \cite{M2} that:
$$\begin{array}{rrrrrrrrr}
a_1&a_2&b_1&b_2&\scalar[\sigma]&\Theta[\sigma_1]&\Theta[\sigma_2]&\mathcal{E}&|\scalar|[\Sigma]\\
0&-1&0&-1&-1.8649&1.10423&1.10423&1*10^{-3}&1.866\\
0&-2&0&-3&-2.0356&1.07859&1.04485&7*10^{-4}&2.26658\\
-1&-1.1&-1&-1.2&-1.51466&1.26238&1.09344&.06&1.73122\\
-1&-2&-1&-2&-1.96762&1.07875&1.07875&5*10^{-4}&2.27566\\
-1&-5&-1&-5&-1.96884&1.07859&1.07859&8*10^{-7}&2.3783\\
-2&-4&-1&-3&-1.88447&1.09513&1.10423&6*10^{-7}&2.56669\\
-5&-6&-1&-2&-2.17533&.975259&1.07861&1*10^{-4}&3.33547\\
-5&-6&-7&-8&-2.43838&.975259&.947119&5*10^{-5}&4.32915\\
\end{array}$$
These calculations show that $\scalar$ takes on both positive and negative values since
$|\scalar|[\Sigma]\ne \scalar[\Sigma]$. If, for example, $(a_1,a_2,b_1,b_2)=(-5,-6,-7,-8)$,
then
\begin{eqnarray*}
&&|\Theta|[\sigma_1]\approx2.03662\ne\Theta[\sigma_1]\approx.975259,\\
&&|\Theta|[\sigma_2]\approx2.10877\ne\Theta[\sigma_2]\approx.947119
\end{eqnarray*}
$\Theta$ takes on both positive and negative values.
\end{example}

\subsection{A uniform estimate on the Gauss curvature does not exist if complex roots are allowed}
If we allow complex roots, no uniform upper bound
is possible. We extend Example~\ref{Ex8.4} as follows.
\begin{example}\label{Ex8.6}
\rm Let $\Sigma_k(t_1,t_2):=(e^{t_1},\cos(kt_1),\sin(kt_1),e^{-t_1})\otimes(e^{t_2},e^{-t_2})$.
We use Mathematica \cite{M2} to express
$g\scalar:=-\mathcal{E}_1\cdot\mathcal{E}_2\cdot\mathcal{E}_3^{\frac12}\cdot\mathcal{E}_4^{-2}$ where:
\medbreak\qquad
$\mathcal{E}_1:=4 \left(e^{4 t_2}+1\right) e^{4 t_1+6 t_2}$,
\medbreak\qquad
$\mathcal{E}_2:=2 \left(k^2+1\right) k^2 e^{2 t_1}+2 \left(k^2+1\right) k^2 e^{6 t_1}+\left(k^2+1\right) e^{8 t_1}+k^2$
\medbreak\qquad\qquad\qquad
$+2 \left(k^4+3 k^2-1\right) e^{4 t_1}+1$,
\medbreak\qquad
$\mathcal{E}_3:=e^{-4 (t_1+t_2)} \left(2 k^2 e^{4 (t_1+t_2)}
+2 \left(k^2+1\right) e^{2 t_1+4 t_2}+2 \left(k^2+1\right) e^{6 t_1+4 t_2}\right.$
\medbreak\qquad\qquad\qquad
$\left.+\left(k^2+1\right) e^{2 t_1+8 t_2}+\left(k^2+1\right) e^{6 t_1+8 t_2}+\left(k^2+4\right) e^{4 t_1+8 t_2}\right.$
\medbreak\qquad\qquad\qquad
$\left.+\left(k^2+1\right) e^{2 t_1}+\left(k^2+1\right) e^{6 t_1}+\left(k^2+4\right) e^{4 t_1}+4 e^{8 t_1+4 t_2}+4 e^{4 t_2}\right)$
\medbreak\qquad
$\mathcal{E}_4:=2 k^2 e^{4 (t_1+t_2)}+2 \left(k^2+1\right) e^{2 t_1+4 t_2}+2 \left(k^2+1\right) e^{6 t_1+4 t_2}$
\medbreak\qquad\qquad\qquad
$+\left(k^2+1\right) e^{2 t_1+8 t_2}+\left(k^2+1\right) e^{6 t_1+8 t_2}+\left(k^2+4\right) e^{4 t_1+8 t_2}$
\medbreak\qquad\qquad\qquad
$+\left(k^2+1\right) e^{2 t_1}+\left(k^2+1\right) e^{6 t_1}+\left(k^2+4\right) e^{4 t_1}+4 e^{8 t_1+4 t_2}+4 e^{4 t_2}$.
\medbreak\noindent
As a function of $k$, this is behaving like $1\cdot k^4\cdot k\cdot k^{-4}$ and thus
the integral goes to infinity as $k\rightarrow\infty$.
We also examine $\Theta(\sigma_{1,k})$ computing
$$
\Theta(\sigma_{1,k})=\left\{\frac{4-k^4}{\left(k^2+e^{-2 t}+e^{2 t}\right)
 \left(k^2 e^{-2 t}+k^2 e^{2 t}+k^2+e^{-2 t}+e^{2 t}+4\right)^{0.5}}\right\}\,.
$$
This is growing linearly in $k$ as $k\rightarrow\infty$ and hence
$\lim_{k\rightarrow\infty}\Theta(\sigma_{1,k})=\infty$.
We examine the identity $\mathcal{E}_k:=\scalar[\Sigma_k]-2\Theta[\sigma_{1,k}]-2\Theta[\sigma_2]+2\pi=0$
numerically:
$$\begin{array}{rrrrrrr}
k&\scalar[\Sigma_k]&\Theta[\sigma_{1,k}]&\Theta[\sigma_2]&\mathcal{E}_k\\
0&-0.933127&1.10423&\frac\pi2&2.96624*10^{-9}\\
1&-2.15652&0.49253&\frac\pi2&-7.07655*10^{-10}\\
2&-4.74826&-0.803332&\frac\pi2&-4.54394*10^{-9}\\
3&-7.77242&-2.31541&\frac\pi2&6.01698*10^{-9}\\
4&-10.9544&-3.90643&\frac\pi2&-6.68724*10^{-8}\\
10&-30.8223&-13.8403&\frac\pi2&8.18756*10^{-7}\\
50&-165.483&-81.1709&\frac\pi2&1.25382*10^{-7}\\
200&-671.171&-334.015&\frac\pi2&.000010552\\
2000&-6739.86&-3368.36&\frac\pi2&-6.08458*10^{-6}\\
20000&-67426.9&-33711.9&\frac\pi2&-0.000982013\\
200000&-674297&-337147&\frac\pi2&-0.0211308
\end{array}$$
\end{example}

\subsection{The norm of the mean curvature vector}
Let
$$\sigma_1(t_1)=(e^{r_1t_1},\dots,e^{r_kt_k})\text{ and }\sigma_2(t_2)=(e^{s_1t_2},\dots,e^{s_\ell t_2})$$
for $r_1>\dots>r_k$ and $s_1>\dots>s_\ell$.
By Lemma~\ref{L3.1} and Lemma~\ref{L7.1}, there exist constants $C_i>0$ so
\smallbreak$\displaystyle
C_1\le\frac g{e^{2r_1t_1+(s_1+s_2)t_2}+e^{(r_1+r_2)t_1+2s_1t_2}}\le C_2$
\smallbreak$\displaystyle
\|H\|\ge C_3g^{-3}\{e^{5r_1t_1+(3s_1+s_2+s_3)t_2}+e^{(3r_1+r_2+r_3)t_1+5s_1t_2}
+e^{(4r_1+r_2)t_1+(4s_1+s_2)t_2}\}$.

\begin{example}\label{Ex8.7}
\rm
If we set $t_1=t_2=t$, $r_1=s_1$, $r_2=s_2$, and $r_3=s_3$, then we get
$$
\|H\|(t)\ge C\frac{e^{(8r_1+r_2+r_3)t}}{e^{3(3r_1+r_2)t}}=Ce^{(-r_1-2r_2+r_3)t}\,.$$
If we take $r_1=1$, $r_2=-3$, and $r_3=-4$, then $\|H\|\ge Ce^{(-1+6-4)t}$ and this tends to infinity as $t$ becomes
large. Thus Assertion~1 of Theorem~\ref{T1.15} can fail if we permit $r_2$ or $s_2$ to be negative.
\end{example}

\begin{example}\label{Ex8.8}
\rm We set $r_1=s_1$, $r_2=s_2$, and $r_3=s_3$. We restrict to $t_1\le t_2$ and estimate
\begin{eqnarray*}
&&\|H\|^2g\ge C\frac{e^{2\{(3r_1+r_2+r_3)t_1+5s_1t_2\}}}{e^{5((r_1+r_2)t_1+2r_1t_2)}}
=Ce^{(r_1-3r_2+2r_3)t_1},\\
&&\int_\Sigma\|H\|^2\operatorname{dvol}\ge\int_0^\infty\int_0^{t_2}Ce^{(r_1-3r_2+2r_3)t_1}dt_1dt_2\\
&&\qquad=\frac C{r_1-3r_2+2r_3}\int_0^\infty\{(e^{(r_1-3r_2+2r_3)t_2}-1\}dt_2\,.
\end{eqnarray*}
This is infinite provided $r_1-3r_2+2r_2>0$. We could, for example, take $r_1=10$, $r_2=2$, and $r_3=1$.
So in general $\|H\|$ is not in $L^2$. More generally, let $p>2$. We may estimate:
\begin{eqnarray*}
&&\|H\|^pg\ge C\frac{e^{p\{(3r_1+r_2+r_3)t_1+5r_1t_2\}}}{e^{(3p-1)\{(r_1+r_2)t_1+2r_1t_2\}}}\\
&&\int_\Sigma\|H\|^p\operatorname{dvol}\ge C\int_0^\infty\int_0^{t_2}e^{(r_1+(1-2p)r_2+pr_3)t_1+(2-p)r_1t_2}dt_1dt_2\\
&&\qquad=\frac C{(r_1+(1-2p)r_2+pr_3)}\int_0^\infty\{e^{((3-p)r_1+(1-2p)t_2+pr_3)t_2}-e^{(2-p)r_1t_2}\}dt_2
\end{eqnarray*}
This will be divergent if $r_1+(1-2p)r_2+pr_3>0$ and $(3-p)r_1+(1-2p)r_2+pr_3>0$. Given $2<p<3$, we can
take $r_1=1$ and $r_2$ and $r_3$ very close to zero to see these inequalities are satisfied and the integral is divergent.
Thus $p=3$ is the best that can be established in general although in specific cases, better convergence can be obtained.
\end{example}

%%%%%%%%%%%%%%%%%%%%%%%%%%
\section*{Acknowledgements}
%%%%%%%%%%%%%%%%%%%%%%%%%%
\noindent Research of P. Gilkey was partially supported by
MTM2013-41335-P with FEDER funds (Spain). Research of J. H. Park was
supported by Basic Science Research Program through the National
Research Foundation of Korea(NRF) funded by the Ministry of
Education.
 We are grateful for the helpful comments and
assistance of M. Brozos-V\'{a}zquez of the Universidade de Coru\~na
(Spain).
%%%%%%%%%%%%%%%%%%%%%%%%%%

\end{document}